\newcommand{\weak}{\rightharpoonup}
\newcommand{\be}{\begin{equation}}
\newcommand{\ee}{\end{equation}}
\newcommand{\ba}{\begin{eqnarray}}
\newcommand{\ea}{\end{eqnarray}}
\newcommand{\beq}{\begin{equation}}
\newcommand{\eeq}{\end{equation}}
\newcommand{\plotErrors}[4]{
\begin{figure}[h!]
         \centering
\begin{subfigure}[b]{0.3\textwidth}
\centering
         \includegraphics[width=\textwidth,height=5cm]{"Figuras/"#1"/L2_err_norm_control_"#2.png}
         \caption{{\expandafter\MakeUppercase#2} MNSE$_u$}
         \label{#4:#2mnseu}
     \end{subfigure}
     \hfill
\begin{subfigure}[b]{0.3\textwidth}
\centering
         \includegraphics[width=\textwidth,height=5cm]{"Figuras/"#1"/L2_err_norm_state_"#2.png}
         \caption{{\expandafter\MakeUppercase#2} MNSE$_y$}
         \label{#4:#2mnsey}
     \end{subfigure}
     \hfill
\begin{subfigure}[b]{0.3\textwidth}
\centering         
         \includegraphics[width=\textwidth,height=5cm]{"Figuras/"#1"/rel_err_"#2.png}
         \caption{{\expandafter\MakeUppercase#2} MNAE$_J$}
         \label{#4:#2mnae}
     \end{subfigure}
\caption{{\expandafter\MakeUppercase#2} errors #3.}
\label{#4:#2error}
\end{figure}}
\renewcommand{\leq}{\leqslant}
\renewcommand{\geq}{\geqslant}
\renewcommand{\ge}{\geqslant}
\providecommand{\norm}[1]{\left\lVert#1\right\rVert}
\def \R {\mathbb{R}}
\def \B {\mathcal{B}}
\def \N {\mathbb{N}}
\def \dis {\displaystyle}
\def\beq{\begin{equation}}
\def\eeq{\end{equation}}
\def\ecart{\noalign{\medskip}}
\def\ba{\begin{array}}
\def\ea{\end{array}}
\newtheorem{theo}{Theorem}
\newtheorem{prop}{Proposition}
\newtheorem{defi}{Definition}
\newtheorem{rem}[defi]{Remark}
\newtheorem{hypo}{Hypothesis}[section]
\DeclareMathOperator*{\argmin}{argmin}
\begin{document}

\title{Optimal polynomial feedback laws for finite horizon control problems}

\author{\name Karl Kunisch \email karl.kunisch@uni-graz.at \\
       \addr Radon Institute for Computational and Applied Mathematics\\
Austrian Academy of Sciences\\
and\\
Institute of Mathematics and Scientific Computing\\
University of Graz\\
Heinrichstraße 36, A-8010 Graz, Austria
       \AND
       \name Donato Vásquez-Varas \email donato.vasquez-varas@ricam.oeaw.ac.at\\ \addr Radon Institute for Computational and Applied Mathematics\\
Austrian Academy of Sciences\\
Altenbergerstraße 69, A-4040 Linz, Austria
}

\editor{}

\maketitle

\begin{abstract}
A  learning technique for finite horizon optimal control problems and its approximation based on polynomials is
analyzed. It allows to circumvent, in part, the curse dimensionality which is involved
when the feedback law is constructed by using the Hamilton-Jacobi-Bellman (HJB) equation. The convergence of the method is analyzed, while paying special attention to avoid the use of a global Lipschitz condition on the nonlinearity which describes the control system. The practicality and efficiency of the method is illustrated
by several examples. For two of them a direct approach based on the HJB equation would be unfeasible.
\end{abstract}

\begin{keywords}
optimal feedback control, Hamilton-Jacobi-Bellman equation, learning approach,  monomial approximation.
\end{keywords}

\section{Introduction}
The synthesis of optimal feedback laws for non-linear control problems remains to be a difficult task. The main approach for obtaining feedback laws relies on the  dynamical programming and it involves solving a Hamilton-Jacobi-Bellman (HJB) equation. It is well known that this equation suffers from the curse of dimensionality, which makes a direct approach infeasible. In the last years many attempts were made in order to alleviate this obstacle. Without intending to be exhaustive, here we present some works related to this topic: representation formulas \cite{Chow1,Chow2,Chow3,DarbonOsher}, approximating the HJB equation by neuronal networks \cite{Han,Darbon,Nusken,Onken,Ito,KW1,KW2,Ruthotto}, data driven approaches \cite{Nakamura1,Nakamura2,AzKaKK,Kang,Albi,Dolgov2},  max-plus algebra methods \cite{Akian,Gaubert,Dower}, polynomial approximation \cite{Kalise1,Kalise2}, tensor decomposition
methods \cite{Horowitz,Stefansson,Gorodetsky,Dolgov,Oster,Oster2}, POD methods \cite{Alla2,KunischVolk}, tree structure algorithms \cite{Alla1}, and sparse grids  techniques \cite{BokanowskiGarckeGriebelPo, Garcke, KangWilcox}, see also the proceedings volume \cite{KaliseKuRa}. Among the classical methods for solving the HJB equation we mention finite difference schemes \cite{Bonnans}, semi-Lagrangian schemes \cite{Falcone}, and policy iteration \cite{Alla, Beard, Puterman, Santos}.
\par
 In the present  work we analyze a learning approach to obtain optimal feedbacks laws for finite horizon, non-linear optimal control problems. This is an extension of the developments made for infinite horizon control problems in \cite{KuVaWal} and \cite{KW1}, and for finite horizon problems in \cite{KW2}.  While in \cite{KW1} and \cite{KW2} we focus on approximations and numerical realization  by neural networks, in \cite{KuVaWal} we started to use monomials as approximating ansatz functions for the feedback operators to be learned. In the analysis part of this paper the optimality conditions are analysed and a relation  between them and the HJB equation is established, which allows to compare  the value function of the control problem and the one which is obtained by the learning approach. Compared to \cite{KW2} we choose a simpler cost functional here, which 
  only involves the value function itself, wereas in  \cite{KW2} we used also first and second order derivatives. 
  In the numerical part, we demonstrate the efficiency of the approach by diverse applications. Let us mention that we also tested approximation of the time-dependent component of the value
 feedback function by Legendre polynomials rather than polynomials. In numerical practice the latter outperformed the former. For this reason we focus here on the approximation by monomials in space as well as in time.   
\par
The proposed learning technique results in  an infinite dimensional optimization  problem. For practical realization  it must be discretized. For this purpose, we specify a rather general framework for which convergence  of  discretization schemes can be established.  Due to their simplicity and the encouraging  results obtained in \cite{KuVaWal}, we  use polynomials as ansatz functions for our concrete numerical realization. In this  case the general assumptions for  convergence hold true, provided  that the value function is sufficiently smooth. We point out that differently from \cite{KuVaWal}, the technique used here to prove  convergence of the method utilizes that  the value function solves the HJB equation. This, together with the  fact that we consider a finite time horizon allows us to get a better convergence order than the one obtained in \cite{KuVaWal}.
\par
The structure of this work is as follows. In \Cref{ControlTheory} we give a brief review of dynamical programming theory. In \Cref{StatementLearningProblem} the formulation  of the learning problem is introduced. In \Cref{OptimalityConditions} we present  and analyse the optimality conditions of the learning problem. In \Cref{FiniteDimensionalLearningProblem}  a finite dimensional approximation of the learning problem together with a convergence result are presented. The learning algorithm is described in \Cref{OptimizationAlg}, and the specific choice of polynomials as ansatz functions for the feedback operator in Section \ref{PolynomialLearningProblem}. Finally, in \Cref{NumericalExperimentsSection} we present three numerical experiments which show that our algorithm is able to solve non-linear and high (here the dimension is 40) dimensional control problems in a standard laptop environment.

\par We end the section by introducing some notation which is needed in the following. For $T>0$, $d$ a non-negative integer, and $A\subset\R^{d}$, the space $C([0,T];A)$ denotes the space of continuous functions from $[0,T]$ to $A$, and for $p\in[1,\infty)$, $L^{p}((0,T);A)$ stands for  the space of  $p$ integrable function from $(0,T)$ to $A$. For a compact set $K\subset\R^{d}$ and a non-negative integer $k$, the space $C^{k}(K)$ denotes the space of $k$-times continuously differentiable functions from $K$ to $\R$, and $C([0,T];C^{k}(K))$ stands for  the space of functions from $[0,T]\times K$ to $\R$ that are continuous in their first  and $k$ times continuously differentiable in their second variable. We utilize the following norm for this space:
$$ \norm{v}_{C([0,T];C^{k}(K))}=\max_{t\in [0,T]}\norm{v(t,\cdot)}_{C^{k}(K)},  $$
where $\norm{\cdot}_{C^{k}(K)}$ is the usual norm on $C^{k}(K)$. {For $y\in\R^{d}$ we denote the $p-$norm  by $|y|_{p}$ for $p\in (1,\infty)\setminus\{2\}$, the supremum norm by $|y|_{\infty}$, and the Euclidean norm simply by $|y|$.}

\section{Control theory and dynamical programming}
\label{ControlTheory}
We aim for an optimal feedback law for the following finite horizon  problem
\beq
 (P) \quad\min_{u} J(t_0,y_0,u):=\int_{t_0}^{T}\left(\ell (t,y)+\frac{\beta}{2}|u|^{2}\right) dt+g(y(T))\label{ControlProblem}\eeq
\beq s.t.\quad y'(t)=f(t,y(t))+Bu(t),\quad y(t_0)=y_0  \label{OpenLoop}
\eeq
where $T>0$, $t_0\in(0,T)$,  $y_0\in \Omega$, $\beta>0$, $u\in L^2((t_0,T); \R^M)$, $f\in C([0,T];C^{1}(\overline{\Omega};\R^{d}))$,  $B\in \R^{d\times M}$ , and  $\ell\in C([0,T]; C^{1}(\overline{\Omega}))$ and $g\in C^{1}(\overline{\Omega})$ are bounded from below by 0. Further
 $\Omega$ is an open, convex, and bounded subset of $\R^{d}$.
For this problem, we define the associated value function by
\beq V(t_0,y_0)=\min_{u} J(t_0,y_0,u).\eeq
If the value function $V$ is in $C^{1}([0,T]\times\Omega)$, then it satisfies the Hamilton-Jacobi-Bellman equation
\beq
 \frac{dV}{dt}(t,y)+\ell(y)+\min_{u\in\R^{M}}\left\{ \nabla_{y} V(t,y)^{\top}(f(y)+Bu)+\frac{\beta}{2}|u|^{2}\right\}=0\mbox{ in }[0,T]\times\Omega, \label{HJBeq}\eeq
\beq \quad V(T,y)=g(y)\mbox{ in }\Omega. \label{HJBbc} \eeq
Moreover, if for an initial condition $y_0\in \Omega$ the optimal solution of \eqref{ControlProblem} is such that the respective state $y^{*}$satisfies $y^{*}(t)\in\Omega$ for all $t\in[t_0,T]$, then we can define a feedback law by evaluating

\beq
{{u^{*}(t,y)=-\frac{1}{\beta}B^{\top}\nabla_y V(t,y)}}\label{OptimalFeedbackEq}
\eeq
along $y=y^*(t)$,
and $y^{*}$ solves the closed loop problem
\beq \frac{d}{dt}y^{*}(t)=f(y^{*})+Bu^{*}(t,y^{*}(t)),\quad y^{*}(t_0)=y_0. \label{closedloop1}\eeq
This implies that once we solve the HJB equation we can obtain an optimal feedback law by using \eqref{OptimalFeedbackEq}. However, it is well known that the HJB equation suffers from the curse of dimensionality, which makes this approach infeasible for large  $d$. Instead we propose a learning approach that we will describe in the following section.
\section{Statement of the learning problem}
\label{StatementLearningProblem}
To start we introduce the relevant  hypotheses and definitions. Let  $\omega\subset\Omega$ be a open subset of $\Omega$ such that $\overline{\omega}\subset\Omega$. For $v\in C([0,T];C^{2}(\overline{\Omega}))$, we define $U_{v}(t,y)$ and $F_v(t,y)$ by
$$ U_v(t,y)=-\frac{1}{\beta}B^{\top}\nabla_y v(t,y)\mbox{ and }F_v(t,y)=f(t,y)+BU_v(t,y) \mbox{ for all }(t,y)\in [0,T]\times \overline{\Omega}.$$
We note that $U_v(t,y)$ satisfies
\beq
U_v(t,y)= \argmin_{U\in\R^{m}}\left\{\frac{\beta}{2}|u|^{2}+\nabla_{y} v(t,y)^{\top}(f(t,y)+BU)\right\} \mbox{ for all }(t,y)\in [0,T]\times\overline{\Omega}.
\eeq
Clearly, $U_v$ is a $C^{1}$ function with respect to $y$. Combining this with the fact that $\overline{\Omega}$ is a convex and compact subset of $\R^{d}$, we have that $U_v$ and $F_v$ are Lipschitz functions in their second argument, namely, defining
$$ L_{U_v}=\frac{|B|}{\beta}\norm{\nabla_y^2 v}_{C([0,T]\times\overline{\Omega};\R^{d\times d})},\quad L_{F_v}=L_f+|B|L_{U_v}, $$
we have
\beq |U_v(t,y_1)-U_v(t,y_2)|\leq L_{U_v}|y_1-y_2|,\quad |F_v(t,y_1)-F_v(t,y_2)|\leq L_{F_v}|y_1-y_2|,\label{LipchitzCondFU}\eeq
 for all $t\in [0,T]$ and $y_1,y_2\in \overline{\Omega}$. In the following we assume that $v$ is such that for all $(t_0,y_0)\in [0,T]\times\omega$, there exists a unique $y\in C^{1}([0,T];\R^{d})$ satisfying
\beq y'(t)=F_{v}(t,y(t)),\quad \mbox{for all } t\in [t_0,T],\quad y(t_0)=y_0.\label{closedloop2}\eeq
 We extend $y$ as follows
\beq  \tilde{y}(t)=\left\{\begin{array}{ll}
    \dis y(t)  & \mbox{ if }t\in [t_0,T]  \\
    \ecart\dis y_0+\int_{t_0}^{t}F_v(s,y_0)ds & \mbox{ if }t\in [0,t_0]
\end{array}\right. \label{closedloop2extension} \eeq
which is a continuously differentiable function in $[0,T]$. Associated  to $v$, we define
\beq
\mathcal{G}(v):=\{(t,\tilde{y}(t;t_0,y_0)):\ (t_0,y_0)\in (0,T)\times\omega, \ t\in [t_0,T) \}\subset(0,T)\times\R^{d},
\eeq
\beq
 \mathcal{R}(v):=\{y\in\R^{d}:\ \exists t\in (0,T),\ (t,y)\in \mathcal{G}(v)\}\subset\R^{d}, \eeq
where $\tilde{y}(t;t_0,y_0)$ is defined in \eqref{closedloop2extension}. Due to the fact that for $t_0\in (0,T)$ and $t\in [t_0,T]$ the function $y_0\in\omega\subset\R^{d}\mapsto \tilde{y}(t;t_0,y_0)$ onto its range in $\R^{d}$ is a diffeomorphism, we know that $\mathcal{G}(v)$ and $\mathcal{R}(v)$ are open sets.\par
Additionally, we define $\mathcal{D}$ as a subset of $C([0,T],C^{2}(\overline{\Omega}))$ of functions such that problem \eqref{closedloop2} has a solution and the closure of the union of all the trajectories with $(t_0,y_0)\in [0,T]\times\omega$ is contained in  $\Omega$, i.e.,
$$
\mathcal{D}=\left\{v\in C([0,T],C^{2}(\overline{\Omega})):\
\begin{array}{l}
    \dis \mbox{ problem }\eqref{closedloop2} \mbox{ has a solution }\forall (t_0,y_0)\in [0,T]\times\omega\\
    \ecart\dis\mbox{ and } \overline{\mathcal{R}(v)}\subset \Omega
\end{array}
\right\}.
$$
\par
We define $\Phi$ as the solution mapping of \eqref{closedloop2extension}, that is,
\beq \begin{array}{c}
    \Phi:[0,T]\times \omega\times \mathcal{D}\mapsto C([0,T];\overline{\Omega}) \\
     (t_0,y_0,v)\in [0,T]\times \omega\times \mathcal{D}\mapsto\Phi(t_0,y_0,v)= \tilde{y}\in C([0,T];\overline{\Omega})
\end{array}  \eeq
 where $\tilde{y}$ is the function given by \eqref{closedloop2extension} {{and the domain is endowed with the norm of $\R\times \R^d \times C([0,T],C^{2}(\overline{\Omega}))$.}}
\par
We formulate the following learning problem
\beq
\dis\min_{v\in \overline{\mathcal{D}}}\mathcal{J}(v):=\frac{1}{T|\omega|}\int_{\omega}\int_{0}^{T}\mathcal{V}(t_0,y_0,v)dy_0dt_0 \label{LearningProblem}\eeq
where $\mathcal{V}:[0,T]\times\omega\times\mathcal{D}$ is defined by
\beq \mathcal{V}(t_0,y_0,v):=\int_{t_0}^{T}\left(\ell(t,y(t))+\frac{\beta}{2}|U_v (t,y(t))|^{2}\right) dt+g(y(T)), \eeq
with $y(t)=\Phi(t_0,y_0,v)(t)$. We shall prove that the function $\Phi$, $\mathcal{V}$, and $\mathcal{J}$ are continuously differentiable functions.\par

\begin{rem}
 We shall verify that if $V\in \mathcal{D}$, then there exists a non-trivial neighborhood of $V$ in $C([0,T];C^{1}(\overline{\Omega}))$ which is contained in $\mathcal{D}$. Moreover, $\mathcal{V}(t_0,y_0,v)\geq V(t_0,y_0)$ for all $v\in C([0,T];C^{1}(\overline{\Omega}))$, which implies that the value function is a solution for the learning problem. Further, if  $v^{*}$ is an optimal solution of \eqref{LearningProblem}, then
 $$ \int_{\Omega}\int_{0}^{T}|\mathcal{V}(t_0,y_0,v^{*})-V(t_0,y_0)| dt_0 dy_0=\int_{\Omega}\int_{0}^{T}\left(\mathcal{V}(t_0,y_0,v^{*})-V(t_0,y_0) \right)dt_0 dy_0\leq 0$$
 and by the continuity of $\mathcal{V}$ and $V$, we obtain that $\mathcal{V}(t_0,y_0,v^{*})=V(t_0,y_0)$ for all $(t_0,y_0)\in [0,T]\times\omega$. Hence, by the dynamical programming principle
 the feedback $U_{v^{*}}$ is optimal in $\mathcal{G}(v^{*})$.
\end{rem}

\section{Optimality conditions}
\label{OptimalityConditions}
Now, we study the necessary and sufficient optimality conditions for the learning problem \eqref{LearningProblem}. To this end, we first analyze the smoothness of the functions $\Phi$, $\mathcal{V}$ and $\mathcal{J}$.
\begin{lemma}
\label{lemma:OpenD}
Let $v\in \mathcal{D}$. Then, there exists $\delta>0$ such that if for $\tilde{v}\in C([0,T];C^{2}(\overline{\Omega}))$ it holds $\norm{\tilde{v}-v}_{C([0,T],C^{1}(\overline{\Omega}))}<\delta$, then $\tilde{v}\in \mathcal{D}$. Further, $\mathcal{D}$ is an open subset of $C([0,T];C^{2}(\overline{\Omega}))$ and
\beq \norm{\Phi(t_0,y_0,v)-\Phi(t_0,y_0,\tilde{v})}_{C([0,T];\R^{d})}\leq C\norm{\tilde{v}-v}_{C([0,T];C^{1}(\overline{\Omega}))},\ \forall (t_0,y_0,\tilde{v})\in [0,T]\times \omega\times \mathcal{D}\label{lemma:OpenD:contPhiv}\eeq
\end{lemma}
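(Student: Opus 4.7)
The plan is to combine a Gronwall estimate on the closed-loop ODE with a continuation argument that uses the compactness of $\overline{\mathcal{R}(v)}\subset\Omega$. The key observation is that the difference $F_v-F_{\tilde v}=B(U_v-U_{\tilde v})=-\tfrac{1}{\beta}BB^\top(\nabla_y v-\nabla_y\tilde v)$ is controlled \emph{only} by the $C^1$-norm of $v-\tilde v$, so we never need to control $\|\nabla_y^2\tilde v\|$ — we use the Lipschitz constant $L_{F_v}$ (coming from $v$ alone) as the Gronwall exponent.

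First I would fix the geometric setup. Since $v\in\mathcal{D}$, the set $K:=\overline{\mathcal{R}(v)}$ is compact and contained in the open set $\Omega$, so $d_0:=\mathrm{dist}(K,\partial\Omega)>0$. Set $L:=L_{F_v}$, which depends only on $v$. Given $\tilde v\in C([0,T];C^2(\overline\Omega))$, local existence and uniqueness for $y'=F_{\tilde v}(t,y)$, $y(t_0)=y_0\in\omega\subset\Omega$, follows from the $C^2$-regularity of $\tilde v$ in the second variable. Let $y=\Phi(t_0,y_0,v)$ and let $\tilde y$ denote a maximal solution associated with $\tilde v$ (initially defined up to some time $t^\star\in(t_0,T]$, as long as it stays in $\overline\Omega$).

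Next I would carry out the core Gronwall estimate. For any $t\in[t_0,t^\star]$, writing
\[
\tilde y(t)-y(t)=\int_{t_0}^{t}\Bigl([F_{\tilde v}(s,\tilde y(s))-F_v(s,\tilde y(s))]+[F_v(s,\tilde y(s))-F_v(s,y(s))]\Bigr)\,ds,
\]
the first bracket is bounded pointwise by $\tfrac{|B|^2}{\beta}\|v-\tilde v\|_{C([0,T];C^1(\overline\Omega))}\leq\tfrac{|B|^2}{\beta}\delta$ (since $\tilde y(s)\in\overline\Omega$ in this range), and the second by $L|\tilde y(s)-y(s)|$ using \eqref{LipchitzCondFU}. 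Gronwall then gives
\[
|\tilde y(t)-y(t)|\leq\tfrac{|B|^2}{\beta}\,T\,e^{LT}\,\delta=:C_1\delta\qquad\text{for all }t\in[t_0,t^\star].
\]
A continuation argument closes the loop: choose $\delta<d_0/(2C_1)$. If $\tilde y$ ever reached the boundary of the compact set $K_{d_0/2}:=\{y:\mathrm{dist}(y,K)\leq d_0/2\}\subset\Omega$ at some first time $t^\star$, then $\mathrm{dist}(\tilde y(t^\star),K)=d_0/2$ while $y(t^\star)\in K$, contradicting $|\tilde y(t^\star)-y(t^\star)|\leq C_1\delta<d_0/2$. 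Hence the trajectory stays inside $K_{d_0/2}$, so the solution is global on $[t_0,T]$ and $\overline{\mathcal{R}(\tilde v)}\subset K_{d_0/2}\subset\Omega$, which is exactly $\tilde v\in\mathcal{D}$.

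For the extension on $[0,t_0]$, a direct estimate on \eqref{closedloop2extension} gives $|\tilde y(t;\tilde v)-\tilde y(t;v)|\leq\int_t^{t_0}|F_{\tilde v}(s,y_0)-F_v(s,y_0)|\,ds\leq \tfrac{|B|^2}{\beta}T\delta$, and combining both intervals yields \eqref{lemma:OpenD:contPhiv} with $C=\tfrac{|B|^2}{\beta}T(1+e^{LT})$. Finally, openness of $\mathcal{D}$ in $C([0,T];C^2(\overline\Omega))$ is immediate because the $C^2$-norm dominates the $C^1$-norm, so the $C^1$-ball of radius $\delta$ around $v$ contains the $C^2$-ball of the same radius. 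The main obstacle is the continuation step — making sure that the Gronwall constant $C_1$ depends only on quantities tied to $v$ (not on $\tilde v$) and that $\tilde y$ cannot escape $\overline\Omega$ before we apply the estimate — which is handled by the boundary argument above using the positive buffer $d_0$.
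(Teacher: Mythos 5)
Your proof is correct and follows essentially the same route as the paper's: the same splitting of $F_v-F_{\tilde v}$ so that only the $C^1$-difference and the Lipschitz constant $L_{F_v}$ of the reference $v$ enter the Gronwall estimate, followed by a continuation/contradiction argument using the positive distance of $\overline{\mathcal{R}(v)}$ to $\partial\Omega$. The only differences are cosmetic: you use the integral form of Gronwall (the paper uses a differential inequality with a generalized Gronwall lemma) and you spell out the $[0,t_0]$ extension explicitly.
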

where $C$ depends on $B,f,T$ and $v$.
\begin{proof}
 Denote by $y$ the solution of \eqref{closedloop2} for $v$. By \eqref{LipchitzCondFU} and the local existence Theorem, there exists an existence time $\bar{T}>t_0$ and a function $\tilde{y}\in C([0,\tilde{T}];\R^{d})$ such that
 $$ \tilde{y}'(t)=F_{\tilde{v}}(t,\tilde{y}(t))\mbox{ for all }t\in (t_0,\tilde{T})\mbox{ and }\tilde{y}(t_0)=y_0. $$
We assume that $\tilde{T}$ is the maximum time that satisfies $\tilde{y}(t)\in \Omega$ for all $t\in (t_0,\tilde{T})$. We shall prove that $\tilde{T}\geq T$ by contradiction. Assume that $\tilde{T}<T$. This implies that $\tilde{y}(\tilde{T})\in\partial\Omega$, otherwise $\tilde{y}(\tilde{T})\in\Omega$ and we could use the existence and uniqueness Theorem to extend $\tilde{y}$ to some $\bar{T}>\tilde{T}$ with $\tilde{y}(t)\in\Omega$ for all $t\in (t_0,\bar{T})$, which is a contradiction. Set $z=y-\tilde{y}$, then for $t\in (0,\tilde{T})$ we have
\begin{equation} \begin{array}{ll}
    \dis \frac{1}{2}\frac{d}{dt}|z(t)|^{2} &\dis= (F_{v}(t,y(t))-F_{\tilde{v}}(t,\tilde{y}(t)))z(t) \\
    \ecart \dis & \dis\leq (F_v(t,y(t))-F_v(t,\tilde{y}(t))+F_v(t,\tilde{y}(t))-F_{\tilde{v}}(t,\tilde{y}(t)))z(t)\\
    \ecart\dis & \dis\leq  L_{F_v}|z(t)|^{2}+\frac{1}{\beta}|B|^{2}|z(t)|\norm{\tilde{v}-v}_{C([0,T];C^{1}(\overline{\Omega}))},
\end{array}
\end{equation}
By a generalized Gronwall inequality (see Theorem 21 in \cite{Dragomir}) this estimates implies
$$ |y(t)-\tilde{y}(t)|=|z(t)| \leq\frac{|B|^{2}}{\beta L_{F_v}}(\exp(L_{F_v}(T-t_0))-1)\norm{\tilde{v}-v}_{C([0,T];C^{1}(\overline{\Omega}))}$$
for all $t\in (t_0,\tilde{T})$. In particular, if $\tilde{v}\in\mathcal{D}$, then we have $\tilde{T}\geq T$ and the last inequality implies \eqref{lemma:OpenD:contPhiv}. By the definition of $\mathcal{D}$ there exists $\varepsilon>0$ such that
$$\mathcal{R}_{\varepsilon}=\{x\in \R^{d}: dist(x,\overline{\mathcal{R}(v)})<\varepsilon\}\subset\Omega $$
and we can choose $\delta$ such that $|z(t)|\leq \frac{\varepsilon}{2}$ for all $t\in [t_{0},\tilde{T}]$. By the continuity of $\tilde{y}$, this implies that $\tilde{y}(\tilde{T})\in\mathcal{R}_\varepsilon\in\Omega$, which is a contradiction and therefore $\tilde{T}\geq T$. Moreover, since this holds uniformly in $(t_0,y_0)\in (0,T)\times\omega$, we have that $\mathcal{R}(\tilde{v})\subset\mathcal{R}_{\varepsilon}\subset\Omega$. Since $\mathcal{R}_{\varepsilon}$ is close, we have  $\overline{\mathcal{R}(\tilde{v})}\subset\Omega$, which implies that $\tilde{v}\in D$ and thus $\mathcal{D}$ is open.
\end{proof}

\begin{prop}\label{prop:DifPhi}
 The functional $\Phi$ is locally Lipschitz in $[0,T]\times\omega\times\mathcal{D}$ and continuously differentiable. Further, setting  $z_1=D_{t_0}\Phi(t_0,y_0,v)\in C([0,T];\R^{d})$, $z_2=D_{y_0}\Phi(t_0,y_0,v)\in C([0,T];\R^{d\times d})$, and $z_{3}= D_{v}\Phi(t_0,y_0,v)\cdot\phi \in C([0,T];\R^{d})$ for $\phi\in C([0,T],C^{2}(\overline{\Omega}))$, $v\in\mathcal{D}$ and $(t_0,y_0)\in (0,T)\times\omega$,  we have
\begin{flalign}
 z_1(t)&=\left\{\begin{array}{ll}\dis -F_{v}(t_0,y_0)+\int_{t_0}^{t} D_{y}F_v(s,y(s))z_1(s) ds & \mbox{for }t\in (t_0,T],  \\
\ecart \dis  -F_{v}(t_0,y_0)  & \mbox{for }t\in [0,t_0], \end{array}\right. \label{PhiPartial:t0}\\
z_2(t)&=\left\{\begin{array}{ll}\dis I+\int_{t_0}^{t} D_{y}F_v(s,y(s))z_2(s) ds & \mbox{for }t\in (t_0,T],  \\
\ecart \dis  I+\int_{t_0}^{t} D_{y}F_v(s,y_0)z_2(s)ds & \mbox{for }t\in [0,t_0], \end{array}\right. \label{PhiPartial:y0}\\
z_3(t)&=\left\{\begin{array}{ll}\dis \int_{t_0}^{t}\left(D_{y}F_v(s,y(s))z_3(s)+BU_\phi(s,y(s))\right) ds & \mbox{for }t\in (t_0,T],  \\
\ecart \dis \int_{t_0}^{t}B U_\phi(s,y_0) ds& \mbox{for }t\in [0,t_0], \end{array}\right.\label{PhiPartial:v} \end{flalign}
where $y=\Phi(t_0,y_0,v)$.
\end{prop}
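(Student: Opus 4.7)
The plan is to establish, in order: local Lipschitz continuity, the existence of the partial derivatives $z_1, z_2, z_3$ with the stated representations, and finally their joint continuity in $(t_0, y_0, v)$. The overall technique is to work with the integral form of \eqref{closedloop2}--\eqref{closedloop2extension}, linearize formally, and close each estimate with Grönwall.

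For the \textbf{local Lipschitz} statement, Lemma~\ref{lemma:OpenD} already handles the $v$-argument; it remains to control the $(t_0, y_0)$ dependence. Fixing $v \in \mathcal D$, the map $(t_0, y_0) \mapsto \Phi(t_0, y_0, v)$ is Lipschitz on $[0,T]\times\omega$ by a standard computation: on the forward interval write $y(t) - \bar y(t) = (y_0 - \bar y_0) + \int_{t_0}^{t}[F_v(s,y(s))-F_v(s,\bar y(s))]\,ds + \int_{\bar t_0}^{t_0} F_v(s,\bar y(s))\,ds$, use the Lipschitz bound \eqref{LipchitzCondFU} and the bound $\|F_v\|_\infty$ on the compact set $[0,T]\times\overline\Omega$, then apply Grönwall. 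On $[0,t_0]$ the extension is an explicit integral and handled directly. Combining with Lemma~\ref{lemma:OpenD} yields local Lipschitz continuity jointly in $(t_0, y_0, v)$.

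For the \textbf{partial derivatives in $t_0$ and $y_0$} (formulas \eqref{PhiPartial:t0}--\eqref{PhiPartial:y0}), these are the classical sensitivity equations for an ODE with Lipschitz $C^1$ vector field, which $F_v$ is whenever $v \in C([0,T]; C^2(\overline\Omega))$. I first introduce $z_1, z_2$ as the unique $C([0,T]; \mathbb R^{d})$ (resp. $C([0,T]; \mathbb R^{d\times d})$) solutions of the stated linear integral equations; existence and uniqueness follow from Picard iteration using that $D_y F_v$ is continuous on the compact set $[0,T]\times\overline{\mathcal R(v)}$. Then, for each variable, I form the difference quotient and subtract the candidate derivative: a Taylor expansion of $F_v$ in its second variable together with Grönwall gives $\|(\Phi(t_0+h, y_0, v) - \Phi(t_0, y_0, v))/h - z_1\|_\infty \to 0$ as $h \to 0$, and similarly for $y_0$. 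The two contributions over $[0, t_0]$ and $[t_0, T]$ are treated separately, with the junction handled by the continuity of $\tilde y$ at $t_0$.

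For the \textbf{derivative in $v$} (formula \eqref{PhiPartial:v}), the key algebraic observation is that $v \mapsto F_v$ is affine:
\begin{equation*}
F_{v+\phi}(t,y) = F_v(t,y) + B U_\phi(t,y), \qquad |U_\phi(t,y)| \le \frac{|B|}{\beta}\,\|\phi\|_{C([0,T]; C^{1}(\overline\Omega))}.
\end{equation*}
I define $z_3$ as the unique solution of the linear integral equation in the statement and set $y_\phi := \Phi(t_0, y_0, v+\phi)$, $\xi_\phi := y_\phi - \Phi(t_0, y_0, v) - z_3$. Expanding $F_{v+\phi}(s, y_\phi(s)) = F_v(s, y(s)) + D_y F_v(s, y(s))(y_\phi - y) + B U_\phi(s, y(s)) + r(s)$ with a remainder $r$ that is quadratic in $y_\phi - y$ (using $v \in C^2$) plus a term of order $\|\phi\|_{C^2}\|y_\phi - y\|_\infty$, Lemma~\ref{lemma:OpenD} gives $\|y_\phi - y\|_\infty = O(\|\phi\|_{C^1})$, and a further Grönwall bound on $|\xi_\phi(t)|$ yields $\|\xi_\phi\|_\infty = o(\|\phi\|_{C([0,T]; C^2(\overline\Omega))})$, which is Fréchet differentiability. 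Continuous differentiability then follows because the coefficients of the linear integral equations defining $z_1, z_2, z_3$ depend continuously on $(t_0, y_0, v)$ through $y=\Phi(t_0, y_0, v)$ and $D_y F_v$, so continuous dependence of solutions of linear ODEs on their coefficients closes the argument.

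The \textbf{main obstacle} is Fréchet (not merely Gâteaux) differentiability in $v$: one must bootstrap the $C^1$-Lipschitz bound of Lemma~\ref{lemma:OpenD} into an $o(\|\phi\|)$ remainder estimate. This is exactly what forces the ambient regularity $v \in C([0,T]; C^2(\overline\Omega))$, because the Taylor remainder of $F_v(s,\cdot)$ requires a Lipschitz $D_y F_v$, i.e.\ a bound on $\nabla_y^2 v$. All other steps are then routine ODE sensitivity arguments.
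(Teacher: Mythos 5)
Your proposal is correct in substance but follows a genuinely different route from the paper on the one point the paper actually writes out in detail. The paper disposes of the $y_0$- and $v$-differentiability by invoking standard sensitivity results (it refers to Cartan's Proposition 1.10.1 and Theorems 3.4.2/3.6.1), and reserves its own argument for the $t_0$-derivative, which it obtains via the implicit function theorem: it introduces the residual map $\Psi(t_0,y)(t)=y(t)-\int_{t_0}^{t}F_v(s,y(s))\,ds-y_0$ (with the obvious modification on $[0,t_0]$), checks that $D_y\Psi$ is invertible by solving a linear Volterra equation, and reads off \eqref{PhiPartial:t0} from the implicit function theorem; Lipschitz continuity in $t_0$ then follows from the uniform bound on $z_1$. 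You instead treat all three derivatives by direct linearization: define $z_1,z_2,z_3$ as solutions of the stated linear integral equations and show the difference quotients converge via Gr\"onwall, exploiting that $v\mapsto F_v$ is affine for the $v$-derivative. This is legitimate and more self-contained, but it relocates the delicacy the paper avoids: for the $t_0$-derivative the moving junction interval between $[0,t_0]$ and $[t_0,T]$ must be estimated uniformly (on an interval of length $|h|$ one needs the difference to equal $-hF_v(t_0,y_0)+o(h)$, using continuity of $F_v$ and the extension formula), and ``continuity of $\tilde y$ at $t_0$'' alone, as you phrase it, understates what has to be checked there; the paper's $\Psi$-based argument gets the formula and the $C^1$-dependence in one stroke without difference quotients.

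Two small corrections to your $v$-differentiability step. First, with only $v\in C([0,T];C^{2}(\overline{\Omega}))$ and $f\in C([0,T];C^{1}(\overline{\Omega}))$, the map $D_yF_v$ is continuous but not Lipschitz in $y$, so the Taylor remainder $r(s)$ is not ``quadratic in $y_\phi-y$''; it is only $o(|y_\phi-y|)$ uniformly, by uniform continuity of $D_yF_v$ on the compact set where the trajectories live. This weaker estimate still closes the argument, since \Cref{lemma:OpenD} gives $\|y_\phi-y\|_\infty=O(\|\phi\|_{C([0,T];C^{1}(\overline{\Omega}))})$ and hence $o(|y_\phi-y|)=o(\|\phi\|)$. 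Second, and relatedly, your closing rationale misattributes the role of the $C^2$ regularity: it is needed so that $D_yF_v$ (which contains $\nabla_y^2 v$) exists, is continuous, and furnishes the Lipschitz constants in \eqref{LipchitzCondFU}, not to make $D_yF_v$ itself Lipschitz. With these adjustments your argument is complete.
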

\begin{proof}
By \Cref{lemma:OpenD} we already know that $\Phi$ is locally Lipchitz with respect to $v$. Additionally, the proof of the Lipschitz continuity of $\Phi$ with respect to $y_0$ is analogous to the proof of Proposition 1.10.1 in \cite[Chapter~2]{cartan}. Similarly, the differentiability with respect to $y_0$ and $v$ are analogous to the proof of Theorems 3.4.2 and 3.6.1 in \cite[Chapter~2]{cartan}. The proof of the Lipschitz continuity and the differentiability of $\Phi$ with respect to $t_0$ is similar to the proofs of Proposition 1.10.1 and Theorem 3.4.2 in \cite[Chapter~2]{cartan}, but more delicate. For this reason, herein we provide the proof of the Lipschitz continuity and the differentiability of $\Phi$ with respect to $t_0$. \par
To verify the differentiability of $\Phi$ with respect to $t_0\in (0,T)$ we utilize the implicit function theorem. For fixed $(y_0,v)\in \Omega\times\mathcal{D}$ we define the mapping $\Psi: (0,T)\times C([0,T];\R^{d})\mapsto C([0,T];\R^{d})$ by
\begin{equation}
\Psi(t_0,y)(t)=\left\{\begin{array}{ll}
\dis y(t)-\int_{t_0}^{t}F_v(s,y(s))ds-y_0 & \mbox{for }t\in [t_0,T],\\
\ecart\dis y(t)-\int_{t_0}^{t}F_v(s,y_0)ds-y_0 & \mbox{for }t\in [0,t_0].
\end{array}\right.
\end{equation}
Its partial derivatives are given by
\begin{equation}
(D_y\Psi(t_0,y) \!\cdot \! z)(t)=\left\{\begin{array}{ll}
\dis z(t)-\int_{t_0}^{t}D_yF_v(s,y(s))z(s)ds & \mbox{for }t\in [t_0,T],\\
\ecart \dis z(t) & \mbox{for }t\in [0,t_0],
\end{array}\right.
\end{equation}
for $z\in C([0,T];\R^{d})$, and
\begin{equation}
\frac{d}{dt_0}\Psi(t_0,y)(t)=\left\{\begin{array}{ll}
\dis F_v(t_0,y(t_0)) & \mbox{for }t\in [t_0,T],\\
\ecart \dis F_v(t_0,y_0) & \mbox{for }t\in [0,t_0].
\end{array}\right.
\end{equation}
Thus $\Psi$ is $C^{1}$ with respect to $y$ and it is $C^1$ with respect to $t_0$ at $(t_0, \Phi(t_0,y_0,v))$. Moreover, $\Psi(t_0,\Phi(t_0,y_0,v))=0$
and $D_y\Psi(t_0,\Phi(t_0,y_0,v))$ is surjective. Indeed for arbitrary $h\in C([0,T];\R^{d})$, there exists a unique $\tilde{z}\in C([t_0,T];\R^{d})$ satisfying
$$\tilde{z}(t)=\int_{t_0}^{t}D_yF_v(s,\Phi(t_0,y_0,v))\tilde{z}(s)ds+h(t) \mbox{ for all }t\in [t_0,T].$$
Setting
$$z(t)=\left\{\begin{array}{ll}
    \tilde{z}(t) &  \mbox{for }t\in [t_0,T],\\
    h(t) & \mbox{for }t\in [0,t_0],
\end{array}\right.$$
we have that $z\in C([0,T];\R^{d})$ and $D_y\Psi(t_0,y)z=h$. Moreover $z\in C([0,T];\R^d)$ depends continuously on $h\in C([0,T];\R^d)$.
Hence, by the implicit function theorem, the function $t_0\mapsto\Phi(t_0,y_0,v)$ is Fréchet differentiable from $\R$ to $C([0,T];\R^{d})$ at each $t_0\in (0,T)$ and its differential is given by \eqref{PhiPartial:t0}. For the Lipschitz continuity we observe that by using the Gronwall inequality in \eqref{PhiPartial:t0} we obtain that $z_1(t)$ is uniformly bounded in $t\in [0,T]$ and $t_0\in (0,T)$. This implies that $t_0\mapsto\Phi(t_0,y_0,v)$ is Lipschitz continuous on $[0,T]$ for each $(y_0,v)\in\Omega\times\mathcal{D}$.
\end{proof}
For $v\in \mathcal{D}$, by continuity the mapping $\Phi$ can be extended to the closure of $\mathcal{G}$ as asserted in the following proposition. We leave the verification to the reader.

\begin{prop}
Let $v\in\mathcal{D}$, the function $\Phi(\cdot,\cdot,v)$ is well defined in $\overline{\mathcal{G}(v)}$. Moreover, \eqref{PhiPartial:y0}, \eqref{PhiPartial:t0} and \eqref{PhiPartial:v} hold for all $(t_0,y_0)\in\overline{\mathcal{G}(v)}$ and $\phi\in C([0,T],C^{1}(\overline{\Omega}))$.
\label{prop:PhiExt}
\end{prop}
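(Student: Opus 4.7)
The plan is to extend $\Phi(\cdot,\cdot,v)$ to $\overline{\mathcal{G}(v)}$ by a continuity argument based on \Cref{prop:DifPhi}, and then pass to the limit in the sensitivity formulas. Specifically, for $(t_0,y_0)\in\overline{\mathcal{G}(v)}$ I would pick a sequence $(t_0^n,y_0^n)\in\mathcal{G}(v)$ converging to $(t_0,y_0)$. Since $\Phi(\cdot,\cdot,v)$ is locally Lipschitz on $(0,T)\times\omega$ by \Cref{prop:DifPhi}, the images $\Phi(t_0^n,y_0^n,v)$ form a Cauchy sequence in $C([0,T];\R^d)$, whose limit is independent of the approximating sequence. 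I would define $\Phi(t_0,y_0,v)$ as this limit; by construction it takes values in $\overline{\mathcal{R}(v)}\subset\Omega$, and passing to the limit in the integral form of \eqref{closedloop2extension} shows it solves the same ODE started at $(t_0,y_0)$. Well-definedness of the extension then follows from uniqueness.

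For the derivative formulas, the strategy is to interpret \eqref{PhiPartial:t0}, \eqref{PhiPartial:y0}, \eqref{PhiPartial:v} as linear Volterra integral equations whose coefficient $D_yF_v(\cdot,y(\cdot))$ and forcing terms depend continuously on $(t_0,y_0)$ through the trajectory $y=\Phi(t_0,y_0,v)$. Since $v\in C([0,T];C^2(\overline\Omega))$ and $y(s)\in\overline{\mathcal{R}(v)}\subset\Omega$, the coefficient $D_yF_v$ is uniformly bounded on the relevant set, hence the associated resolvent operator is bounded and continuous in the data. Passing to the limit along $(t_0^n,y_0^n)\to(t_0,y_0)$ in the fixed-point equations for $z_1,z_2,z_3$ then shows the formulas survive on $\overline{\mathcal{G}(v)}$.

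To relax the regularity $\phi\in C([0,T];C^2(\overline\Omega))$ to $\phi\in C([0,T];C^1(\overline\Omega))$, I would exploit the observation that $U_\phi(s,y)=-\beta^{-1}B^\top\nabla_y\phi(s,y)$ and the integrand of \eqref{PhiPartial:v} are already well defined for merely $C^1$ spatial regularity. Approximating $\phi$ by a sequence $\phi^n\in C([0,T];C^2(\overline\Omega))$ with $\phi^n\to\phi$ in $C([0,T];C^1(\overline\Omega))$, I would use that both sides of \eqref{PhiPartial:v}, viewed as functions of $\phi$, are continuous in the $C^1$ topology (the right-hand side because $U_\phi$ depends linearly and continuously on $\nabla_y\phi$, the left-hand side because $D_v\Phi$ extends by the same Gronwall-type estimate used in \Cref{lemma:OpenD}), and deduce the formula in the limit.

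The only genuinely delicate point I foresee is ensuring that the Lipschitz and boundedness constants in \Cref{prop:DifPhi} do not blow up as $(t_0^n,y_0^n)$ approaches a point of $\overline{\mathcal{G}(v)}\setminus\mathcal{G}(v)$ (in particular for sequences with $t_0^n\to 0$ or $y_0\in\partial\mathcal{R}(v)$). This is resolved by noting that the estimates are driven by $L_{F_v}$, $\|F_v\|_{C([0,T]\times\overline\Omega)}$ and $\|D_yF_v\|_{C([0,T]\times\overline\Omega;\R^{d\times d})}$, all of which are finite and independent of $(t_0,y_0)$ because $v\in C([0,T];C^2(\overline\Omega))$ and $\overline\Omega$ is compact. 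Once uniformity of the constants is in hand, the three steps above combine into the claimed proposition.
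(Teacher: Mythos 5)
The paper itself leaves this verification to the reader, and your outline is the natural one; your closing observation that all constants come from $L_{F_v}$, $\norm{F_v}_{C([0,T]\times\overline{\Omega})}$ and $\norm{D_yF_v}_{C([0,T]\times\overline{\Omega};\R^{d\times d})}$, hence are independent of the base point, is indeed the crux. Two steps still need tightening. First, $\mathcal{G}(v)$ is not contained in $(0,T)\times\omega$: its points have the form $(t,\tilde y(t;s,x))$ with $x\in\omega$ but $\tilde y(t;s,x)$ generally outside $\omega$, so $\Phi(t_0^n,y_0^n,v)$ is not yet defined for your approximating sequence and the Lipschitz estimate of \Cref{prop:DifPhi} cannot literally be invoked at those points. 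You must first extend $\Phi$ to $\mathcal{G}(v)$, which is immediate from the flow property: the forward solution from $(t,\tilde y(t;s,x))$ is the tail of $\tilde y(\cdot;s,x)$ and stays in $\overline{\mathcal{R}(v)}\subset\Omega$, while the backward part is given explicitly by \eqref{closedloop2extension}. Only then does your Cauchy/Gronwall argument for points of $\overline{\mathcal{G}(v)}$ apply, and there you should also use the set $\mathcal{R}_{\varepsilon}\subset\Omega$ from the proof of \Cref{lemma:OpenD} to guarantee that the limit trajectory, and the trajectories from nearby initial data, do not leave $\Omega$.

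Second, and more substantively: proving that the solutions $z_1^n,z_2^n,z_3^n$ of the Volterra equations based at $(t_0^n,y_0^n)$ converge to the solutions based at $(t_0,y_0)$ does not by itself show that the extended $\Phi$ is differentiable at $(t_0,y_0)\in\overline{\mathcal{G}(v)}$ with these limits as its derivatives; a limit of derivatives is the derivative of the limit only after an additional argument (local uniformity of the convergence plus a mean-value step), which your sketch omits. The cleaner route, consistent with your own uniformity remark, is to note that the proofs of \Cref{lemma:OpenD} and \Cref{prop:DifPhi} use $y_0\in\omega$ only to ensure that the trajectory remains in a compact subset of $\Omega$; since every trajectory emanating from a point of $\overline{\mathcal{G}(v)}$ stays in $\overline{\mathcal{R}(v)}$, and initial points in a small neighborhood produce trajectories inside $\mathcal{R}_{\varepsilon}\subset\Omega$ by Gronwall, those proofs apply verbatim on an open neighborhood of $\overline{\mathcal{G}(v)}$ (with one-sided derivatives in $t_0$ when $t_0\in\{0,T\}$), and \eqref{PhiPartial:t0}--\eqref{PhiPartial:v} follow there directly. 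Your density argument for relaxing $\phi$ to $C([0,T];C^{1}(\overline{\Omega}))$ in \eqref{PhiPartial:v} is fine, since $z_3$ depends on $\phi$ only through $\nabla_y\phi$, linearly and continuously in the $C^{1}$ norm, so the formula extends by continuity.
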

Clearly the differentiability of $\Phi$ implies the differentiability of $\mathcal{V}$. On the top of that, in the following proposition we provide a characterization the derivatives of $\mathcal{V}$, which is of vital importance for deriving and understanding the optimality condition of \eqref{LearningProblem}.
\begin{prop}
\label{lemma:Vauxregularity}
The function $\mathcal{V}$ is continuously differentiable $(0,T)\times\omega\times \mathcal{D}$. Further, letting $v\in \mathcal{D}$, $(t_0,y_0)\in (0,T)\times\omega$, and $y=\Phi(t_0,y_0,v)$, and defining the adjoint state $p\in C^{1}([t_0,T];\R^{d})$ as the solution of
\beq \left\{\begin{array}{l}
      \dis-p'(t)-D_y F_v^{\top}(t,y(t))  p(t)=-\nabla_{y}\ell(t,y(t))-\beta D_{y}U_{v}(t,y(t))^{\top}U_v(t,y(t)),\ \forall t\in[t_0,T],\\
      \ecart\dis p(T)=-\nabla g(y(T)),
\end{array}\right. \label{lemma:Vauxregularity:adjoint}\eeq
we have
\beq  \nabla_{y_0} \mathcal{V}(t_0,y_0,v)=-p(t_0),\label{lemma:Vauxregularity:ygrad} \eeq
\beq  \frac{\partial \mathcal{V}}{\partial t_0} (t_0,y_0,v)=-\ell(t_0,y_0)-\frac{\beta}{2}|U_v(t_0,y_0)|^2+p(t_0)^{\top}\cdot \left(f (t_0,y_0) +BU_v(t_0,y_0)\right),\label{lemma:Vauxregularity:tgrad} \eeq
and for $\phi\in  C([0,T];C^{1}(\overline{\Omega}))$
\beq D_v\mathcal{V}(t_0,y_0,v)\cdot\phi=\frac{1}{\beta}\int_{t_0}^{T}\nabla_y \phi(t,y(t)){^\top} BB^{\top}(\nabla_y v(t,y(t))+p(t))dt.\label{lemma:Vauxregularity:vgrad} \eeq

\end{prop}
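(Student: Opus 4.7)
The plan is first to establish that $\mathcal{V}$ is $C^1$ on $(0,T)\times\omega\times\mathcal{D}$ by the chain rule, and then to compute each of the three derivatives directly from the formulas in \Cref{prop:DifPhi}, reformulating the resulting integrals via the adjoint state $p$. Since $\mathcal{V}(t_0,y_0,v)$ is a composition of $\ell$, $U_v$, $g$ (which are $C^1$ in their spatial argument and jointly continuous in $(t,y,v)$) with the $C^1$ mapping $\Phi$, together with the integration $\int_{t_0}^T$ whose upper limit is constant and whose lower limit is $t_0$, continuous differentiability follows from the regularity of $\Phi$ given by \Cref{prop:DifPhi} combined with the fundamental theorem of calculus for the $t_0$ variable. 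So the content of the proposition lies in the identities, not in the regularity.

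Let $y=\Phi(t_0,y_0,v)$, set $z_1=D_{t_0}\Phi(t_0,y_0,v)$, $z_2=D_{y_0}\Phi(t_0,y_0,v)$, and $z_3=D_v\Phi(t_0,y_0,v)\cdot\phi$. A direct differentiation of $\mathcal{V}$ yields, for each derivative, an expression of the form
\[
\int_{t_0}^{T}\!\Big(\nabla_y\ell(t,y(t))^{\top}z_i(t)+\beta\,U_v(t,y(t))^{\top}D_yU_v(t,y(t))z_i(t)\Big)\,dt+\nabla g(y(T))^{\top}z_i(T)
\]
plus boundary terms that are nonzero only for the $t_0$ derivative and an additional $\beta\,U_v^{\top}U_\phi$ term for the $v$ derivative. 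The key trick is to recognize that the adjoint equation \eqref{lemma:Vauxregularity:adjoint} is precisely engineered so that
\[
\frac{d}{dt}\bigl(p(t)^{\top}z_i(t)\bigr)=\nabla_y\ell(t,y(t))^{\top}z_i(t)+\beta\,U_v(t,y(t))^{\top}D_yU_v(t,y(t))z_i(t)+p(t)^{\top}r_i(t),
\]
where $r_i$ is the inhomogeneous part of the $z_i$ equation ($r_1=r_2=0$, $r_3(t)=BU_\phi(t,y(t))$). I will verify this computation once, using $p'=-D_yF_v^{\top}p+\nabla_y\ell+\beta D_yU_v^{\top}U_v$ and $z_i'=D_yF_v\,z_i+r_i$.

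Integrating this identity from $t_0$ to $T$, using $p(T)=-\nabla g(y(T))$, yields
\[
-\nabla g(y(T))^{\top}z_i(T)-p(t_0)^{\top}z_i(t_0)=\int_{t_0}^{T}\!\!\Big(\nabla_y\ell\,{}^{\top}z_i+\beta\,U_v^{\top}D_yU_v\,z_i\Big)dt+\int_{t_0}^{T}p^{\top}r_i\,dt.
\]
Plugging in the three initial values $z_2(t_0)=I$, $z_1(t_0)=-F_v(t_0,y_0)$, $z_3(t_0)=0$ (from \Cref{prop:DifPhi}) immediately produces \eqref{lemma:Vauxregularity:ygrad}, the $F_v$-contribution in \eqref{lemma:Vauxregularity:tgrad}, and the representation of $D_v\mathcal{V}\cdot\phi$ in terms of $\int_{t_0}^{T}p^{\top}BU_\phi\,dt$. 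For \eqref{lemma:Vauxregularity:tgrad} I will also need the boundary terms $-\ell(t_0,y_0)-\frac{\beta}{2}|U_v(t_0,y_0)|^2$ coming from Leibniz's rule applied to the lower limit $t_0$. For \eqref{lemma:Vauxregularity:vgrad} I will combine the $\int p^{\top}BU_\phi$ term with the direct $\beta\int U_v^{\top}U_\phi$ contribution and rewrite everything using $U_v=-\tfrac1\beta B^{\top}\nabla_y v$ and $U_\phi=-\tfrac1\beta B^{\top}\nabla_y\phi$; the two terms combine to $\tfrac1\beta\nabla_y\phi^{\top}BB^{\top}(\nabla_yv+p)$ as required.

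The main obstacle is the $t_0$ derivative: it mixes three effects — the Leibniz boundary contribution at $t=t_0$, the nonzero initial value $z_1(t_0)=-F_v(t_0,y_0)$, and the fact that $y(t_0)=y_0$ only because of the extension \eqref{closedloop2extension}. Careful tracking of signs in the adjoint identity is what makes the $\nabla g(y(T))^{\top}z_1(T)$ cancel out, leaving the clean expression $p(t_0)^{\top}(f(t_0,y_0)+BU_v(t_0,y_0))$. Continuity of all three derivatives in $(t_0,y_0,v)$ then follows from continuous dependence of $y$, $z_i$, and $p$ on $(t_0,y_0,v)$, which in turn comes from \Cref{lemma:OpenD} and \Cref{prop:DifPhi} together with standard ODE perturbation arguments for the linear adjoint system.
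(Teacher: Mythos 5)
Your proposal follows essentially the same route as the paper's proof: differentiate $\mathcal{V}$ through the formulas for $D_{t_0}\Phi$, $D_{y_0}\Phi$, $D_v\Phi$, introduce the adjoint $p$, use the duality identity $\frac{d}{dt}(p^{\top}z_i)$ with the initial values $z_1(t_0)=-F_v$, $z_2(t_0)=I$, $z_3(t_0)=0$, and combine the $\int p^{\top}BU_\phi$ term with $\beta\int U_\phi^{\top}U_v$ via $U_v=-\tfrac1\beta B^{\top}\nabla_y v$, $U_\phi=-\tfrac1\beta B^{\top}\nabla_y\phi$ to obtain \eqref{lemma:Vauxregularity:vgrad}. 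The computation is correct (your signs are in fact consistent throughout, whereas the paper's intermediate identity displays $z_i(t_0)p(t_0)$ where $-z_i^{\top}(t_0)p(t_0)$ is meant), so nothing further is needed.
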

\begin{proof}
Since $\Phi$ is continuously differentiable, we have that $\mathcal{V}$ is continuously differentiable. Moreover, for $t_0\in (0,T)$, $y_0\in \omega$ and $v\in\mathcal{D}$, we have that

\beq
\frac{d}{dt_0}\mathcal{V}(t_0,y_0,v)= \left(\begin{array}{l}\dis-\ell(t_0,y_0)-\frac{\beta}{2}\left|U_v(t_0,y_0)\right|^{2}+ z_1^{\top}(T)\nabla g(y(T))\\
\ecart\dis+\int_{t_0}^{T}z_1^{\top}(t)\left(\nabla_{y} \ell (t,y(t))+\beta D_{y}U_{v}(t,y(t))^{\top} U_{v}(t,y(t))\right) dt,
\end{array} \right) \label{lemma:Vauxregularity:proof:z1}\eeq
\beq \nabla_{y_0} \mathcal{V}(t_0,y_0,v)=\int_{t_0}^{T}z_2^{\top}(t)\left(\nabla_{y} \ell (t,y(t))+\beta D_{y}U_{v}(t,y(t))^{\top} U_{v}(t,y(t))\right) dt+z_2^{\top}(T)\nabla g(y(T)) \label{lemma:Vauxregularity:proof:z2} \eeq

\beq D_{v}\mathcal{V}(t_0,y_0,v)\cdot\phi=\left(\begin{array}{l}
     \dis \int_{t_0}^{T}z_3^{\top}(t)\left(\nabla_{y} \ell (t,y(t))+\beta D_{y}U_{v}(t,y(t))^{\top} U_{v}(t,y(t))\right) dt\\
     \ecart\dis+\beta\int_{t_0}^{T} U_{\phi}(t,y(t))^{\top}U_{v}(t,y(t))dt+ z_3^{\top}(T)\nabla g(y(T))
\end{array}\right) \label{lemma:Vauxregularity:proof:z3}
\eeq
where $y=\Phi(t_0,y_0,v)$, $z_1=\frac{d}{dt_0} \Phi(t_0,y_0,v)$, $z_2=D_{y_0} \Phi(t_0,y_0,v)$ and $z_3=D_{v} \Phi(t_0,y_0,v)\cdot\phi$. We know that $z_1$, $z_2$ and $z_3$ satisfy \eqref{PhiPartial:t0},\eqref{PhiPartial:y0} and \eqref{PhiPartial:v}. Then, for $t\in (t_0,T)$ and $i\in\{1,2\}$ we have
\beq \frac{d}{dt}z_{i}(t)=D_{y}F(t,y(t))z_{i}.\label{lemma:Vauxregularity:proof:zieq}\eeq
and
\beq z_1(t_0)=-F_{v}(t_0,y_0) \mbox{ and } \quad z_2(t_0)=I. \label{lemma:Vauxregularity:proof:zinitcond}\eeq
For $z_3$ we know
\beq \frac{d}{dt}z_{3}(t)=D_{y}F(t,y(t))z_{i}+B U_\phi(t,y(t)),\quad z_{3}(t_0)=0.\label{lemma:Vauxregularity:proof:z3eq}\eeq
Multiplying  \eqref{lemma:Vauxregularity:adjoint} by $z_i$ for $i\in \{1,2\}$, using \eqref{lemma:Vauxregularity:proof:zieq}, and rearranging the terms, we obtain
\beq \int_{t_0}^{T}z_i^{\top}(t)\left(\nabla_{y} \ell (t,y(t))+\beta D_{y}U_{v}(t,y(t))^{\top} U_{v}(t,y(t))\right) dt+ z_i^{\top}(T)\nabla g(y(T))=z_i(t_0)p(t_0).\eeq
Using this and \eqref{lemma:Vauxregularity:proof:zinitcond} in \eqref{lemma:Vauxregularity:proof:z1} and \eqref{lemma:Vauxregularity:proof:z2}, we obtain \eqref{lemma:Vauxregularity:tgrad} and \eqref{lemma:Vauxregularity:ygrad}. Analogously, for $z_3$ we have
\beq \begin{split}
\int_{t_0}^{T}z_3^{\top}(t)\left(\nabla_{y} \ell (t,y(t))+\beta D_{y}U_v(t,y(y))^{\top} U_{v}(t,y(t))\right) dt+ z_3^{\top}(T)\nabla g(y(T))\\=z_3(t_0)p(t_0)-\int_{t_0}^{T}U_{\phi}(t,y(t))^{\top}B^{\top}p(t) dt.\end{split}\eeq
Combining this with the definition of $U_v$ and $U_\phi$, and \eqref{lemma:Vauxregularity:proof:z3}, we obtain \eqref{lemma:Vauxregularity:vgrad}.
\end{proof}
Further, for a fixed $v\in \mathcal{D}$, by means of \Cref{prop:PhiExt} it easy to see that $\mathcal{V}(\cdot,\cdot,v)$ can be extended to $\mathcal{G}(v)$ as is stated in the following proposition.
\begin{prop}
\label{prop:derexten}
Let $v\in\mathcal{D}$, the function $\mathcal{V}(\cdot,\cdot,v)$ is well defined in $\overline{\mathcal{G}(v)}$. Moreover, \eqref{lemma:Vauxregularity:ygrad}, \eqref{lemma:Vauxregularity:tgrad} and \eqref{lemma:Vauxregularity:vgrad} hold for all $(t_0,y_0)\in \overline{\mathcal{G}(v)}$ and $\phi\in C([0,T];C^{1}(\overline{\Omega}))$.
\end{prop}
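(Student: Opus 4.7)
The plan is to bootstrap Proposition \ref{prop:PhiExt}, which already extends $\Phi(\cdot,\cdot,v)$ continuously to $\overline{\mathcal{G}(v)}$, and to reduce the statement to (i) a well-definedness check, (ii) a local re-derivation of the formulas at interior points of $\mathcal{G}(v)$, and (iii) a passage to the limit on the boundary. For well-definedness, Proposition \ref{prop:PhiExt} yields $y := \Phi(t_0,y_0,v) \in C([0,T];\overline{\Omega})$ for every $(t_0,y_0) \in \overline{\mathcal{G}(v)}$, so $\mathcal{V}(t_0,y_0,v) = \int_{t_0}^{T}(\ell + \tfrac{\beta}{2}|U_v|^2)\,dt + g(y(T))$ is well defined because $\ell$, $U_v$ and $g$ are continuous on $[0,T]\times\overline{\Omega}$. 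The linear terminal-value problem \eqref{lemma:Vauxregularity:adjoint} then has continuous coefficients along $y$, so it has a unique solution $p \in C^1([t_0,T];\R^d)$ depending continuously on $y$, and hence on $(t_0,y_0) \in \overline{\mathcal{G}(v)}$. Consequently, the right-hand sides of \eqref{lemma:Vauxregularity:ygrad}, \eqref{lemma:Vauxregularity:tgrad} and \eqref{lemma:Vauxregularity:vgrad} are well-defined and continuous on $\overline{\mathcal{G}(v)}$.

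For an interior point $(t_1,y_1) \in \mathcal{G}(v)$, I would exploit the fact that $\mathcal{G}(v)$ is open to pick a neighborhood $N \subset \mathcal{G}(v)$ of $(t_1,y_1)$ on which every trajectory $\Phi(t,y,v)$ remains in $\overline{\Omega}$ on $[0,T]$ and is jointly $C^1$ in $(t,y)$ by the extended version of Proposition \ref{prop:DifPhi}. The proof of Proposition \ref{lemma:Vauxregularity} is entirely local: it differentiates $\mathcal{V}$ by the chain rule using the sensitivities $z_1$, $z_2$, $z_3$, and then combines these with the adjoint identity obtained by multiplying \eqref{lemma:Vauxregularity:adjoint} by $z_i$ and integrating by parts. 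None of these manipulations genuinely use that $y_1 \in \omega$; they only need the regularity that is now available on $N$. Replaying the argument verbatim with $(t_1,y_1)$ in place of $(t_0,y_0)$ therefore yields \eqref{lemma:Vauxregularity:ygrad}--\eqref{lemma:Vauxregularity:vgrad} at every point of $\mathcal{G}(v)$.

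For a point $(t_0,y_0) \in \overline{\mathcal{G}(v)} \setminus \mathcal{G}(v)$, I would approximate it by $(t_0^n,y_0^n) \in \mathcal{G}(v)$ tending to $(t_0,y_0)$. Proposition \ref{prop:PhiExt} gives uniform convergence of the trajectories $y^n \to y$ in $C([0,T];\overline{\Omega})$, continuous dependence of the linear adjoint ODE on its coefficients gives uniform convergence $p^n \to p$ on the relevant intervals, and dominated convergence transfers the equalities in \eqref{lemma:Vauxregularity:ygrad}--\eqref{lemma:Vauxregularity:vgrad} to the limit. The main obstacle is really the interior step: Proposition \ref{lemma:Vauxregularity} was stated only for $y_0 \in \omega$, so one must verify that its proof carries over to any base point in $\mathcal{G}(v)$. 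This is essentially a bookkeeping matter once one notices that $\mathcal{G}(v)$ being open supplies the required perturbation room and that $\Phi$ is jointly $C^1$ there, but it is the only place where a genuine argument beyond continuity is needed.
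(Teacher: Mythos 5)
Your argument is essentially the paper's: the paper offers no separate proof of this proposition, stating only that the extension follows from \Cref{prop:PhiExt} combined with the derivation in \Cref{lemma:Vauxregularity}, which is exactly the route you take (continuous extension of $\Phi$, re-running the adjoint computation away from $\omega$, continuity of the right-hand sides). One remark: since $\overline{\mathcal{R}(v)}\subset\Omega$ for $v\in\mathcal{D}$, trajectories emanating from any point of $\overline{\mathcal{G}(v)}$ stay in a compact subset of $\Omega$, so nearby initial data also yield admissible trajectories and the computation of \Cref{lemma:Vauxregularity} applies verbatim at every point of $\overline{\mathcal{G}(v)}$; this makes your separate boundary limiting step unnecessary, which is worth noting because, as written, that step tacitly assumes differentiability of $\mathcal{V}$ at points of $\overline{\mathcal{G}(v)}\setminus\mathcal{G}(v)$ rather than establishing it.
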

As a consequence of the previous result and the chain rule we obtain the following proposition.
\begin{prop}
\label{prop:Jregularity}
Let $v\in \mathcal{D}$. Then $\mathcal{J}$ is continuously differentiable in $v$ and the differential of $\mathcal{J}$ in $v$ is given by
\beq D\mathcal{J}(v)\cdot\phi=\frac{1}{T|\omega|}\int_{\omega}\int_{0}^{T}D_v\mathcal{V}(t_0,y_0,v)\cdot\phi\, dt_0dy_0\mbox{ for all }\phi\in C([0,T];C^{1}(\overline{\Omega})). \label{prop:Jregularity:eq}\eeq
\end{prop}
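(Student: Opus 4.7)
The plan is to differentiate under the integral sign using the explicit representation of $D_v\mathcal{V}$ established in \Cref{lemma:Vauxregularity} and \Cref{prop:derexten}. A key observation is that the formula \eqref{lemma:Vauxregularity:vgrad}, namely
$$D_v\mathcal{V}(t_0,y_0,v)\cdot\phi = \frac{1}{\beta}\int_{t_0}^{T}\nabla_y \phi(t,y(t))^{\top} BB^{\top}(\nabla_y v(t,y(t))+p(t))\,dt,$$
depends on $\phi$ only through $\nabla_y\phi$ evaluated along the trajectory $y(t)=\Phi(t_0,y_0,v)(t)$. Hence the right-hand side of \eqref{prop:Jregularity:eq} is a well-defined linear functional of $\phi\in C([0,T];C^{1}(\overline{\Omega}))$; the task reduces to showing that this functional is indeed the derivative of $\mathcal{J}$ at $v$ and that it depends continuously on $v$.

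The first step is to fix $v\in\mathcal{D}$ and invoke \Cref{lemma:OpenD} to choose $\delta>0$ so that $v_s := v+s\phi\in\mathcal{D}$ for $|s|<\delta$ whenever $\phi\in C([0,T];C^{2}(\overline{\Omega}))$ with $\|\phi\|_{C([0,T];C^{1}(\overline{\Omega}))}\leq 1$, and so that all trajectories $y_s(t)=\Phi(t_0,y_0,v_s)(t)$ remain in a common compact subset $K\subset\Omega$ uniformly in $(t_0,y_0,s)\in(0,T)\times\omega\times(-\delta,\delta)$. On this set, $\nabla_y v_s$ is uniformly bounded because $\{v_s\}$ is bounded in $C([0,T];C^{2}(\overline{\Omega}))$. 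For the adjoint state $p_s$ associated with $v_s$ by \eqref{lemma:Vauxregularity:adjoint}, the terminal data $-\nabla g(y_s(T))$ and the source $-\nabla_y\ell(\cdot,y_s)-\beta D_yU_{v_s}^\top U_{v_s}$ are uniformly bounded on $K$, and $D_yF_{v_s}$ is uniformly bounded in the operator norm; Gronwall's inequality applied to \eqref{lemma:Vauxregularity:adjoint} yields a constant $M$ with $|p_s(t)|\le M$ uniformly in $(t,t_0,y_0,s)$. Consequently
$$|D_v\mathcal{V}(t_0,y_0,v_s)\cdot\phi|\le \tfrac{|B|^2}{\beta}(T)(C_v+M)\,\|\phi\|_{C([0,T];C^{1}(\overline{\Omega}))},$$
with $C_v$ depending only on $v$ and $\delta$. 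This dominating bound is integrable over $\omega\times(0,T)$.

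Writing $\mathcal{J}(v+s\phi)-\mathcal{J}(v)=\frac{1}{T|\omega|}\int_\omega\int_0^T\int_0^s D_v\mathcal{V}(t_0,y_0,v_\sigma)\cdot\phi\,d\sigma\,dt_0\,dy_0$ by the fundamental theorem of calculus in $\sigma$ (legitimate by \Cref{lemma:Vauxregularity}), dividing by $s$, and applying dominated convergence together with the continuity of $D_v\mathcal{V}$ in $v$, I obtain the Gâteaux derivative formula \eqref{prop:Jregularity:eq} for every $\phi\in C([0,T];C^{2}(\overline{\Omega}))$. The right-hand side $L(\phi)$ of \eqref{prop:Jregularity:eq} then satisfies $|L(\phi)|\le C\|\phi\|_{C([0,T];C^{1}(\overline{\Omega}))}$, so the functional $L$ extends uniquely by continuity from the dense subspace $C([0,T];C^{2}(\overline{\Omega}))\subset C([0,T];C^{1}(\overline{\Omega}))$, and the extension is given by the same integral formula; this yields the asserted identity for all $\phi\in C([0,T];C^{1}(\overline{\Omega}))$. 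To obtain Fréchet differentiability and continuity of $D\mathcal{J}$, I repeat the uniform bound above with $v$ replaced by a sequence $v_n\to v$ in $C([0,T];C^{2}(\overline{\Omega}))$; continuity of $\Phi$, of the adjoint map $v\mapsto p$, and of $\nabla_y v$ pointwise in $(t_0,y_0)$, combined with dominated convergence, gives $\|D\mathcal{J}(v_n)-D\mathcal{J}(v)\|_{(C([0,T];C^{1}(\overline{\Omega})))^*}\to 0$.

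The main technical obstacle is the uniform bound on the adjoint $p_s$ across all $(t_0,y_0)\in(0,T)\times\omega$ and all $s\in(-\delta,\delta)$. It hinges on having the trajectories $y_s$ remain in a common compact subset of $\Omega$ (which is where \Cref{lemma:OpenD} and \Cref{prop:PhiExt} are crucial), so that the coefficients in \eqref{lemma:Vauxregularity:adjoint} enjoy the uniform $C^1$-control needed for a single Gronwall argument to cover the whole family.
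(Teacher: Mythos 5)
Your argument is correct and takes essentially the same route as the paper, which gives no written proof at all and simply asserts the proposition as a consequence of \Cref{lemma:Vauxregularity} and \Cref{prop:derexten} together with the chain rule, i.e., differentiation under the integral sign. Your writeup merely supplies the details the paper leaves implicit: the uniform domination of $D_v\mathcal{V}(t_0,y_0,\cdot)$ over $(0,T)\times\omega$ via the common compact containment of trajectories and a Gronwall bound on the adjoints, and the density extension of the derivative formula from $C([0,T];C^{2}(\overline{\Omega}))$ to $C([0,T];C^{1}(\overline{\Omega}))$.
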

Consequently the necessary optimality condition for $v^{*}\in C([0,T];C^{2}(\overline{\Omega}))$ is
\beq \int_{\omega}\int_{0}^{T}D_v\mathcal{V}(t_0,y_0,v^{*})\cdot\phi \,dt_0dy_0=0 \mbox{ for all }\phi\in C([0,T];C^{1}(\overline{\Omega})).\label{necessarycond}\eeq
In the subsequent proposition we prove that the necessary condition implies that $\mathcal{V}(\cdot,\cdot,v^{*})$ satisfies HJB equation in $\mathcal{G}(v^*)$. A remarkable consequence of this is that the necessary conditions of optimality becomes sufficient if the set $\mathcal{G}(v^*)$ is large enough.
\begin{prop}
\label{prop:sufficientcond} Let $v^{*}\in \mathcal{D}$ satisfy   \eqref{necessarycond}. Then $\mathcal{V}(\cdot,\cdot,v^{*})$ satisfies equation \eqref{HJBeq} in $\mathcal{G}(v^*)$ and $\mathcal{V}(T,y_0,v^{*})=g(y_0)$ for all $y_0$ such that $(T,y_0)\in \overline{\mathcal{G}(v^*)}$. Further, if for $(t_0,y_0)\in \mathcal{G}(v^{*})$ there exists an optimal trajectory of \eqref{ControlProblem}, denoted by $y^{*}\in C^{1}([0,T];\R^{d})$, which satisfies $(t,y^*(t))\in \mathcal{G}(v^{*})$ for all $t\in[t_0,T]$, then $U_{v^{*}}$ is an optimal feedback for \eqref{ControlProblem}.
\end{prop}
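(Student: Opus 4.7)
The strategy I would pursue proceeds in three stages. First, I would identify the adjoint state $p$ of \Cref{lemma:Vauxregularity} along every trajectory with the gradient of $\mathcal{V}(\cdot,\cdot,v^*)$. Let $y=\Phi(t_0,y_0,v^*)$ and fix $t\in [t_0,T]$. By uniqueness of the state ODE, the trajectory emanating from $(t,y(t))$ coincides with $y$ on $[t,T]$, and since the adjoint is a backward linear ODE driven only by that trajectory, its unique solution also agrees with $p$ on $[t,T]$. Applying \Cref{prop:derexten} at the point $(t,y(t))\in\overline{\mathcal{G}(v^*)}$ then yields $p(t)=-\nabla_y\mathcal{V}(t,y(t),v^*)$, so that \eqref{lemma:Vauxregularity:vgrad} rewrites as
\begin{equation*}
D_v\mathcal{V}(t_0,y_0,v^*)\cdot\phi=\frac{1}{\beta}\int_{t_0}^{T}\nabla_y\phi(t,y(t))^{\top}BB^{\top}\nabla_y\bigl(v^*-\mathcal{V}(\cdot,\cdot,v^*)\bigr)(t,y(t))\,dt.
\end{equation*}

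Second and most crucially, I would feed the necessary condition \eqref{necessarycond} with the particular test function $\phi=v^*-\tilde{\mathcal{V}}$, where $\tilde{\mathcal{V}}\in C([0,T];C^{1}(\overline\Omega))$ is a Whitney-type $C^{1}$-extension of $\mathcal{V}(\cdot,\cdot,v^*)$ from the closed set $\overline{\mathcal{G}(v^*)}$ to $[0,T]\times\overline\Omega$. Such an extension is legitimate because \Cref{prop:derexten} guarantees that $\mathcal{V}(\cdot,\cdot,v^*)$ and its partial derivatives are continuous on $\overline{\mathcal{G}(v^*)}$, and by construction $\tilde{\mathcal{V}}$ and its $y$-gradient coincide with those of $\mathcal{V}(\cdot,\cdot,v^*)$ at every point of that closed set, hence along every trajectory. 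Substituting this $\phi$ in \eqref{necessarycond} collapses the condition into
\begin{equation*}
\frac{1}{\beta T|\omega|}\int_{\omega}\int_{0}^{T}\int_{t_0}^{T}\bigl|B^{\top}\nabla_y\bigl(v^*-\mathcal{V}(\cdot,\cdot,v^*)\bigr)(t,y(t))\bigr|^{2}\,dt\,dt_0\,dy_0=0,
\end{equation*}
and non-negativity of the integrand together with continuity force $B^{\top}\nabla_y v^*(t,y)=B^{\top}\nabla_y\mathcal{V}(t,y,v^*)$ pointwise on $\mathcal{G}(v^*)$, equivalently $U_{v^*}(t,y)=-\tfrac{1}{\beta}B^{\top}\nabla_y\mathcal{V}(t,y,v^*)$, which is precisely the HJB-minimizer associated with $\mathcal{V}(\cdot,\cdot,v^*)$. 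I expect this step to be the main obstacle: $\mathcal{V}(\cdot,\cdot,v^*)$ is a priori only defined on $\overline{\mathcal{G}(v^*)}$ (typically a proper subset) and one needs to produce a genuine element of $C([0,T];C^{1}(\overline\Omega))$ matching both $\mathcal{V}$ and its $y$-gradient there; without this single-shot choice, a change of variables along trajectories only yields a distributional divergence equation for a weighted version of $BB^{\top}\nabla_y(v^*-\mathcal{V})$, which is too weak to extract the desired pointwise identity.

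Finally, combining \eqref{lemma:Vauxregularity:tgrad} and \eqref{lemma:Vauxregularity:ygrad} yields the closed-loop Bellman identity $\partial_t\mathcal{V}+\nabla_y\mathcal{V}^{\top}(f+BU_{v^*})+\ell+\tfrac{\beta}{2}|U_{v^*}|^{2}=0$ on $\mathcal{G}(v^*)$, and the pointwise identification above converts $\nabla_y\mathcal{V}^{\top}BU_{v^*}+\tfrac{\beta}{2}|U_{v^*}|^{2}$ into $\min_{u\in\R^M}\{\nabla_y\mathcal{V}^{\top}Bu+\tfrac{\beta}{2}|u|^{2}\}$, producing \eqref{HJBeq} in $\mathcal{G}(v^*)$. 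The terminal condition $\mathcal{V}(T,y_0,v^*)=g(y_0)$ is immediate from the definition of $\mathcal{V}$ (the time integral is empty and the trajectory is constant) and extends to the relevant boundary points by continuity. For the feedback-optimality claim I would then run a standard verification argument: differentiating $t\mapsto\mathcal{V}(t,y^*(t),v^*)$ along the optimal open-loop trajectory $y^*$, which by hypothesis stays in $\mathcal{G}(v^*)$, and using the HJB inequality gives $J(t_0,y_0,u^*)\geq\mathcal{V}(t_0,y_0,v^*)$; combined with the reverse bound $\mathcal{V}(t_0,y_0,v^*)\geq V(t_0,y_0)$ from the remark following \eqref{LearningProblem}, this forces $\mathcal{V}(t_0,y_0,v^*)=V(t_0,y_0)$, and since $\mathcal{V}(t_0,y_0,v^*)$ is by construction the cost produced by the closed-loop feedback $U_{v^*}$, the latter is optimal.
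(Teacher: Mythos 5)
Your proposal follows essentially the same route as the paper's proof: identify the adjoint with $-\nabla_{y}\mathcal{V}(\cdot,\cdot,v^{*})$ along trajectories via \Cref{prop:derexten}, test \eqref{necessarycond} with $\phi=v^{*}-\mathcal{V}$ so that \eqref{lemma:Vauxregularity:vgrad} collapses to a vanishing integral of $\left|B^{\top}\nabla_{y}(v^{*}-\mathcal{V})\right|^{2}$ forcing $B^{\top}\nabla_{y}v^{*}=B^{\top}\nabla_{y}\mathcal{V}$ on $\mathcal{G}(v^{*})$, combine \eqref{lemma:Vauxregularity:tgrad} and \eqref{lemma:Vauxregularity:ygrad} to obtain \eqref{HJBeq}, and conclude feedback optimality by the verification argument. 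The only genuine difference is that you make explicit a Whitney-type $C^{1}$ extension of $\mathcal{V}(\cdot,\cdot,v^{*})$ from $\overline{\mathcal{G}(v^{*})}$ so that $\phi$ is an admissible element of $C([0,T];C^{1}(\overline{\Omega}))$, a technical point the paper passes over by simply writing $\phi=v^{*}-\mathcal{V}$.
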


\begin{proof}
By \Cref{prop:derexten}  we can use \eqref{lemma:Vauxregularity:ygrad}
and \eqref{lemma:Vauxregularity:tgrad} to obtain that for all
\beq \nabla_{y_0}\mathcal{V}(t,y(t;t_0,y_0),v^{*})=-p(t;t_0,y_0)\label{prop:sufficientcond:proof:ygrad}
\eeq
and
\beq \begin{array}{l} \dis\frac{d \mathcal{V}}{dt_0}(t,y(t;t_0,y_0),v^{*})+\ell(t,y(t;t_0,y_0))+\frac{\beta}{2}\left|U_{v^*}(t,y(t;t_0,y_0))\right|^2\\
\ecart\dis=-p(t;t_0,y_0)^{\top} (f(t,y(t;t_0,y_0))+BU_{v^{*}}(t,y(t;t_0,y_0))).\end{array}\label{prop:sufficientcond:proof:tgrad}
\eeq
for all $(t_0,y_0)\in (0,T)\times\omega$ and for all $t\in [t_0,T]$, where $y(\cdot;t_0,y_0)$ and $p(\cdot;t_0,y_0)$ are the solutions of \eqref{closedloop2} and \eqref{lemma:Vauxregularity:adjoint} with $v=v^*$. Setting $\phi=v^*-\mathcal{V}$, plugging it into \eqref{lemma:Vauxregularity:vgrad}, and using \eqref{prop:sufficientcond:proof:ygrad} we have
$$ \int_{\omega}\int_{0}^{T}\int_{t_0}^{T}\left|B^{\top}\left(\nabla_{y}v^{*}(t,y(t;t_0,y_0))+p(t;t_0,y_0)\right)\right|^{2}dt_0dtdy_0=0\label{prop:sufficientcond:proof:vpeq}$$
which implies
$$ B^{\top}\nabla_y v^{*}(t,y(t;t_0,y_0))=-B^{\top}p(t;t_0,y_0)\mbox{ for all }t\in [t_0,T].$$
Hence, by the definition of $U_v^{*}$ and \eqref{prop:sufficientcond:proof:ygrad} we have
$$U_v^{*}(t,y(t;t_0,y_0))=-\frac{1}{\beta}B^{\top}\nabla_y v^{*}(t,y(t;t_0,y_0))=-\frac{1}{\beta}B^{\top}\nabla_{y_0} \mathcal{V}(t,y(t;t_0,y_0),v^{*}) $$
Combining this with \eqref{prop:sufficientcond:proof:tgrad} and rearranging the terms, we get
$$\begin{array}{l} \dis\frac{d \mathcal{V}}{dt_0}(t,y(t;t_0,y_0),v^{*})+\ell(t,y(t;t_0,y_0))\\
\ecart\dis=\frac{1}{2\beta}\left|B^{\top}\nabla_{y_0} \mathcal{V}(t,y(t;t_0,y_0),v^{*})\right|^2-\nabla_{y_0} \mathcal{V}(t,y(t;t_0,y_0),v^{*})^{\top} f(t,y(t;t_0,y_0)),\end{array}$$
which is equivalent to
$$\begin{array}{l} \dis\frac{d \mathcal{V}}{dt_0}(t,y(t;t_0,y_0),v^{*})+\ell(t,y(t;t_0,y_0))\\
\ecart \dis =-\min_{U\in\R^{M}}\left\{\nabla_{y_0}\mathcal{V}(t,y(t;t_0,y_0),v^{*})^{\top}(f(t,y(t;t_0,y_0))+BU)+\frac{\beta}{2}|U|^2\right\}.\end{array}$$
Since this holds for all $(t_0,y_0)\in (0,T)\times\omega$ and $t\in[t_0,T]$, it follows that $\mathcal{V}(\cdot,\cdot,v^{*})$ satisfies HJB equation in $\mathcal{G}(v^{*})$. Finally, assume that $(t_0,y_0)\in \mathcal{G}(v^{*})$ and that there exists an optimal trajectory of \eqref{ControlProblem}, denoted by $y^{*}$, that satisfies $(t,y^{*}(t))\in \mathcal{G}(v^{*})$. Then, by the verification theorem we have that
$$ \mathcal{V}(t_0,y_0,v^{*})\leq V(t_0,y_0)$$
and therefore $U_{v^{*}}$ is an optimal feedback for \eqref{ControlProblem}.
\end{proof}
\begin{rem}
This theorem ensures that if a function $v^{*}\in \mathcal{D}$ satisfies the optimality condition \eqref{necessarycond} and the set $\overline{\mathcal{G}(v^{*})}$ is large enough to contain at least one optimal trajectory of \eqref{ControlProblem} for all $(t_0,y_0)\in(0,T)\times\Omega$, then the feedback $U_{v^{*}}$  is an optimal solution of the learning problem \eqref{LearningProblem}.
\end{rem}

\section{Finite dimensional Learning problem}
\label{FiniteDimensionalLearningProblem}
Herein we introduce a discrete approximation of \eqref{LearningProblem}. Let $X$ be a finite dimensional real linear space and a continuous injection $\mathcal{I}:X\mapsto C^{1}([0,T];C^{2}(\overline{\Omega}))$. For simplicity we assume that $X=\R^{N}$. We define
\beq \min_{\theta\in X,\mathcal{I} (\theta)\in \overline{\mathcal{D}}} \mathcal{J}(\mathcal{I} (\theta))+\gamma\left(\frac{1-r}{2}|\theta|^{2}_{2}
     +r|\theta|_{1}\right) \label{AppLearningProblem}\eeq
where $\alpha>0$ and $r\in [0,1]$ are penalty coefficients. The penalty term $\gamma\left(\frac{1-r}{2}|\theta|^{2}_{2}+r|\theta|_{1}\right)$  ensures the coercivity of the objective function. Moreover the $\ell_{1}$ term promotes the sparsity of the solution of \eqref{AppLearningProblem}. However, unless we assume some further hypotheses on the structure of $\ell$, $f$, $B$ and/or the value function $V$, we do not yet know if there exist a solution of \eqref{AppLearningProblem}. \par
We now discuss the existence of solutions for \eqref{AppLearningProblem} and under which conditions this problem is converging to \eqref{LearningProblem}. We say that \eqref{AppLearningProblem} is feasible if there exists $\theta\in\R^{m}$ such that $\mathcal{I}(\theta)\in  \overline{\mathcal{D}}$. It is clear that the objective function of \eqref{AppLearningProblem} is continuous and coercive. Then, since the set $\{\theta\in X:\ \mathcal{I}(\theta)\in \overline{\mathcal{D}}\}$ is closed we can use the direct method of the calculus of variations to deduce that problem \eqref{AppLearningProblem} has at least one solution. Now, we analyze the convergence of \eqref{AppLearningProblem} to \eqref{LearningProblem}. For this purpose, let us consider the following hypothesis for a sequence of finite subsets of $C([0,T];C^{1}(\overline{\Omega}))$
\begin{hypo}
Let $\{(X_{k},\mathcal{I}_k)\}_{k=1}^{\infty}$ such that, for all $k\geq 0$, $X_{k}$ is a finite dimensional linear space, $\mathcal{I}_k:X_k\mapsto C([0,T];C^{2}(\overline{\Omega}))$ is continuous and injective,  and that there exists a sequence $\vartheta_{k}\in X_k$ satisfying
\beq \lim_{k\to \infty}\norm{\mathcal{I}_{k}(\vartheta_k) -V}_{C([0,T];C^{1}(\overline{\Omega}))}=0. \eeq
\label{Hyp:Seq:Dens}
\end{hypo}
Under \cref{Hyp:Seq:Dens} we obtain the following convergence result for \eqref{AppLearningProblem}.
\begin{theo}
Assume that $\{X_{k}\}_{k=1}^{\infty}$ is a sequence satisfying \Cref{Hyp:Seq:Dens} and that $V\in \mathcal{D}$. Then, there exists $k_{0}\in \N$ such that for all $k\geq k_{0}$, $\gamma>0$ and $r\in[0,1]$ problem \eqref{AppLearningProblem} has at least one solution with $X=X_{k}$. Further, for all $r\in [0,1]$, there exists a sequence of penalty coefficients $\gamma_{k}>0$ (possible converging to 0) such that the following hold
\begin{itemize}
    \item there exits a constant $C>0$ independent of $k$, which depends on $\norm{f}_{C([0,T]\times\Omega)}$, $\norm{V}_{C([0,T];C^{1}(\overline{\Omega}))}$, $|B|$, $\beta$, $\Omega$ and $T$ such that 
    \beq \int_{\omega}\int_{0}^{T}\left|\mathcal{V}(t_0,y_0,\mathcal{I}_k(\theta_{k}))-\mathcal{V}(t_0,y_0,V)\right|dt_0 dy_0\leq C\norm{\mathcal{I}_k(\vartheta_{k})-V}_{C([0,T];C^1(\overline{\Omega}))}, \label{Theo:convergence:L1conv}\eeq
    
    \item for almost every $(t_0,y_0)\in (0,T)\times\omega$, setting
    \beq y_{k}=\Phi(t_0,y_0,\mathcal{I}_k(\theta_{k})) \mbox{ and } u_{k}(t)=U_{\mathcal{I}_k(\theta_{k})}(t,y_{k}(t)), \eeq
    there exists a weakly convergent sub-sequence of $(u_{k})_{k\geq k_0}$ in  $L^{2}((0,T);\R^{M})$. Moreover, every such sequence converges strongly in $L^{2}((0,T);\R^{M})$ to an optimal control of the open loop problem \eqref{ControlProblem},
\end{itemize}
where for each $k\geq k_0$, $\theta_{k}\in X_k$ is an optimal solution of \eqref{AppLearningProblem} with $X=X_{k}$ and $\gamma=\gamma_k$, and $\vartheta_k\in X_k$ is the sequence that appears in \Cref{Hyp:Seq:Dens}.
\label{Theo:convergence}
\end{theo}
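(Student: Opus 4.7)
The overall plan has three ingredients: existence of the discrete minimizers, a local Lipschitz estimate for $\mathcal{V}(\cdot,\cdot,v)$ at $v=V$ combined with the a priori inequality $\mathcal{V}(\cdot,\cdot,v)\ge V$, and a weak-compactness argument in $L^{2}$ for the closed-loop controls.

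For existence, \Cref{lemma:OpenD} shows that $\mathcal{D}$ is open in $C([0,T];C^{1}(\overline{\Omega}))$. Since $V\in\mathcal{D}$ and $\mathcal{I}_{k}(\vartheta_{k})\to V$ in that norm by \Cref{Hyp:Seq:Dens}, there exists $k_{0}$ with $\mathcal{I}_{k}(\vartheta_{k})\in\mathcal{D}\subset\overline{\mathcal{D}}$ for $k\ge k_{0}$, so \eqref{AppLearningProblem} is feasible. On $X_{k}\simeq\R^{N_{k}}$ the penalty $P(\theta):=\frac{1-r}{2}|\theta|_{2}^{2}+r|\theta|_{1}$ is coercive, $\mathcal{J}\circ\mathcal{I}_{k}$ is continuous by \Cref{prop:Jregularity}, and the preimage of $\overline{\mathcal{D}}$ under the continuous map $\mathcal{I}_{k}$ is closed; the direct method produces a minimizer $\theta_{k}$.

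Next I would prove the Lipschitz estimate $|\mathcal{V}(t_{0},y_{0},v)-V(t_{0},y_{0})|\le C\,\|v-V\|_{C([0,T];C^{1}(\overline{\Omega}))}$ for $v$ in a fixed $C^{1}$-neighborhood $\mathcal{U}\subset\mathcal{D}$ of $V$ (recall $\mathcal{V}(\cdot,\cdot,V)=V$, since $V$ satisfies the HJB equation and the remark after \eqref{LearningProblem} applies). \Cref{lemma:OpenD} gives $\norm{\Phi(t_{0},y_{0},v)-\Phi(t_{0},y_{0},V)}_{C([0,T];\R^{d})}\le C_{1}\|v-V\|_{C^{1}}$; the feedback difference is pointwise bounded by $\frac{|B|}{\beta}\|v-V\|_{C^{1}}$, while $U_{V}$ is uniformly bounded on the compact $\overline{\mathcal{R}(V)}$. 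Composing with the $C^{1}$ functions $\ell,g$ on $\overline{\Omega}$ and handling the quadratic term $|U_{v}|^{2}$ by an $(a+b)(a-b)$ expansion gives the claim, with $C$ depending only on the quantities listed. Setting $\eta_{k}:=\norm{\mathcal{I}_{k}(\vartheta_{k})-V}_{C([0,T];C^{1}(\overline{\Omega}))}$ and $\gamma_{k}:=\eta_{k}/(1+P(\vartheta_{k}))$ so that $\gamma_{k}P(\vartheta_{k})\le\eta_{k}$, the optimality of $\theta_{k}$ yields
\beq
\mathcal{J}(\mathcal{I}_{k}(\theta_{k}))\le\mathcal{J}(\mathcal{I}_{k}(\vartheta_{k}))+\gamma_{k}P(\vartheta_{k})\le\mathcal{J}(V)+(C+1)\eta_{k}.
\eeq
Because $\mathcal{V}(t_{0},y_{0},\mathcal{I}_{k}(\theta_{k}))\ge V(t_{0},y_{0})$ by the remark following \eqref{LearningProblem}, the integrand in \eqref{Theo:convergence:L1conv} is nonnegative and the absolute value can be dropped, so integration gives precisely \eqref{Theo:convergence:L1conv}.

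For the control convergence, extract a subsequence (not relabeled) along which $\mathcal{V}(t_{0},y_{0},\mathcal{I}_{k}(\theta_{k}))\to V(t_{0},y_{0})$ pointwise a.e.\ in $(t_{0},y_{0})\in(0,T)\times\omega$. Fix such a point. Since $\frac{\beta}{2}\int_{t_{0}}^{T}|u_{k}|^{2}\,dt\le\mathcal{V}(t_{0},y_{0},\mathcal{I}_{k}(\theta_{k}))$ is uniformly bounded, $\{u_{k}\}$ is bounded in $L^{2}((0,T);\R^{M})$ and along a subsequence $u_{k}\weak\bar{u}$. The trajectories $y_{k}$ remain in the compact set $\overline{\Omega}$ and $\dot{y}_{k}=f(\cdot,y_{k})+Bu_{k}$ is bounded in $L^{2}$, so by Arzelà–Ascoli (after another subsequence) $y_{k}\to\bar{y}$ uniformly; passing to the limit in the integral form of \eqref{closedloop2} (linear term by weak convergence, nonlinear term by uniform convergence and continuity of $f$) identifies $\bar{y}$ as the open-loop trajectory for $\bar{u}$. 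Continuity of $\ell,g$ gives $\int\ell(t,y_{k})\,dt+g(y_{k}(T))\to\int\ell(t,\bar{y})\,dt+g(\bar{y}(T))$, and weak lower semicontinuity of $\norm{\cdot}_{L^{2}}^{2}$ yields $\liminf_{k}J(t_{0},y_{0},u_{k})\ge J(t_{0},y_{0},\bar{u})$. Since the left-hand side equals $V(t_{0},y_{0})$, one has $J(t_{0},y_{0},\bar{u})=V(t_{0},y_{0})$, i.e.\ $\bar{u}$ is optimal; consequently $\int|u_{k}|^{2}\,dt\to\int|\bar{u}|^{2}\,dt$, which together with $u_{k}\weak\bar{u}$ upgrades to strong convergence in $L^{2}$.

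The main obstacle is the Lipschitz bound at $v=V$: one must track the constants carefully because the cost couples the state flow and the feedback $U_{v}$, both of which depend on $v$ in a nonlinear way, and a genuinely local (rather than global) Lipschitz constant is needed since $f$ is only locally Lipschitz. A secondary subtlety is that the argument implicitly uses $\mathcal{I}_{k}(\theta_{k})\in\mathcal{D}$ rather than merely $\overline{\mathcal{D}}$, which is granted by \Cref{prop:PhiExt} extending $\Phi$ and hence $\mathcal{V}$ by continuity to $\overline{\mathcal{D}}$.
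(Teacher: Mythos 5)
Your overall architecture (feasibility via \Cref{lemma:OpenD} plus \Cref{Hyp:Seq:Dens}, direct method for existence, a penalty scaling $\gamma_k$ calibrated to $\norm{\mathcal{I}_k(\vartheta_k)-V}_{C([0,T];C^{1}(\overline{\Omega}))}$, dropping the absolute value via $\mathcal{V}\geq V$, then a.e.\ extraction, weak $L^2$ compactness, lower semicontinuity and norm convergence to upgrade to strong convergence) coincides with the paper's proof. The gap is in your key lemma. You propose to bound $|\mathcal{V}(t_0,y_0,v)-V(t_0,y_0)|$ by comparing the closed-loop trajectory of $v$ with that of $V$ through the estimate of \Cref{lemma:OpenD}, and you assert the resulting constant ``depends only on the quantities listed.'' That assertion is not substantiated and is in fact false for this route: the constant in \Cref{lemma:OpenD} is of Gronwall type, $\frac{|B|^2}{\beta L_{F_V}}(e^{L_{F_V}T}-1)$, with $L_{F_V}=L_f+\frac{|B|^2}{\beta}\norm{\nabla_y^2 V}_\infty$; your splitting of $|U_v(t,y_v)-U_V(t,y_V)|$ needs the Lipschitz constant $L_{U_V}=\frac{|B|}{\beta}\norm{\nabla_y^2 V}_\infty$; and composing $\ell$ and $g$ with the two trajectories brings in $\nabla_y\ell$ and $\nabla g$. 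Hence your constant depends on $\norm{V}_{C([0,T];C^{2}(\overline{\Omega}))}$, on the Lipschitz constant of $f$, on the $C^1$ data of $\ell$ and $g$, and carries an exponential factor, whereas the theorem claims $C$ depends only on $\norm{f}_{C([0,T]\times\Omega)}$, $\norm{V}_{C([0,T];C^{1}(\overline{\Omega}))}$, $|B|$, $\beta$, $\Omega$ and $T$. So as written your argument proves a weaker statement than \eqref{Theo:convergence:L1conv} with its stated constant; this is precisely the point the paper emphasizes about its proof technique.

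The paper avoids any trajectory comparison. Working along the trajectory $y$ generated by $v_k:=\mathcal{I}_k(\vartheta_k)$ itself with control $u=U_{v_k}(\cdot,y(\cdot))$, it uses that $u(t)$ minimizes $U\mapsto \frac{\beta}{2}|U|^2+(f+BU)^\top\nabla_y v_k(t,y(t))$, tests this with $U=U_V(t,y(t))$, substitutes the HJB equation \eqref{HJBeq} satisfied by $V$, and bounds the mismatch terms by $K\norm{v_k-V}_{C([0,T];C^1(\overline{\Omega}))}$ and $\frac{|B|^{2}}{\beta}\norm{v_k-V}_{C([0,T];C^1(\overline{\Omega}))}$ with $K=\sup_{t,z}|F_V(t,z)|$. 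Recognizing $-\frac{dV}{dt}(t,y(t))-(f+Bu)^\top\nabla_y V(t,y(t))=-\frac{d}{dt}\bigl[V(t,y(t))\bigr]$ and integrating from $t_0$ to $T$, with $V(T,y(T))=g(y(T))$, gives directly $\mathcal{V}(t_0,y_0,v_k)\leq V(t_0,y_0)+\bigl(2K+\frac{|B|^2}{\beta}\bigr)(T-t_0)\norm{v_k-V}_{C([0,T];C^1(\overline{\Omega}))}$, i.e.\ exactly the claimed constant, without Gronwall factors, second derivatives of $V$, or derivatives of $\ell$ and $g$. If you replace your Lipschitz lemma by this argument, the remainder of your proposal (including your safer choice $\gamma_k=\eta_k/(1+P(\vartheta_k))$, which avoids a possible division by zero present in the paper's choice) goes through essentially as in the paper; your appeal to \Cref{prop:PhiExt} to handle minimizers on $\overline{\mathcal{D}}$ is not quite the right tool, but this is a minor point that the paper itself glosses over.
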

\begin{proof}
Let $\vartheta_{k}\in X_k$ be a sequence such that
\beq \lim_{k\to \infty} \norm{\mathcal{I}_k(\vartheta_{k})-V}_{C([0,T];C^{1}(\overline{\Omega}))} =0.\label{Theo:convergence:proof:conV}\eeq
Since $V\in \mathcal{D}$, we can combine \Cref{lemma:OpenD} and \eqref{Theo:convergence:proof:conV} to deduce that there exists $k_0\in \N$ such that $\mathcal{I}_k(\vartheta_{k})\in \mathcal{D}$ for all $k\geq k_0$. Then, for all $k\geq k_0$ problem \eqref{AppLearningProblem} has at least one solution  with $X=X_{k}$. In the following we assume that $k\geq k_0$ and $\norm{\mathcal{I}_k(\vartheta_{k})-V}_{C([0,T];C^{1}(\overline{\Omega}))}\leq 1$.\par
Let $(t_0,y_0)\in (0,T)\times\omega$ and set $y=\Phi(t_0,y_0,\mathcal{I}_k(\vartheta_{k}))$ and $u(t)=U_{\mathcal{I}_k(\vartheta_{k})}(t,y(t))$. We have
\beq
\begin{array}{l}
\dis \frac{\beta}{2}|u(t)|^{2}+(f(t,y(t))+Bu(t))^{\top}\nabla_y \mathcal{I}_k(\vartheta_{k})(t,y(t)) \\
\ecart\dis \leq \frac{\beta}{2}|U|^{2}+(f(t,y(t))+BU)^{\top}\nabla_y \mathcal{I}_k(\vartheta_{k})(t,y(t))
\end{array}
 \label{Theo:convergence:proof:mincond} \eeq
for all $t\in (t_0,T)$ and $U\in \R^{d}$. By replacing $U$ by $U_V$ in \eqref{Theo:convergence:proof:mincond} and using that $V$ satisfies \eqref{HJBeq}, we have
$$\begin{array}{l}
    \dis \frac{\beta}{2}|u(t)|^{2}+(f(t,y(t))+Bu(t))^{\top}\nabla_y \mathcal{I}_k(\vartheta_{k})(t,y(t))\\
     \ecart \dis \leq \Big(-\frac{dV}{dt}(t,y(t))-\ell(t,y(t))+\\
     \ecart \dis
     (f(t,y(t))+BU_V(t,y(t)))^{\top}\left(\nabla_y \mathcal{I}_k(\vartheta_{k})(t,y(t))-\nabla_y V(t,y(t))\right)\Big).
\end{array}$$
Setting $K=\dis\sup_{z\in\Omega,t\in [0,T]}|F_{V}(t,z)|$ and using it in the previous inequality we obtain
\beq \begin{array}{l}\dis\frac{\beta}{2}|u(t)|^{2}+(f(t,y(t))+Bu(t))^{\top}\nabla_y \mathcal{I}_k(\vartheta_{k})(t,y(t))\\
\ecart\dis\leq -\frac{dV}{dt}(t,y(t))-\ell(t,y(t))+K \norm{\mathcal{I}_k(\vartheta_{k})-V}_{C([0,T];C^{1}(\overline{\Omega}))}.\end{array}
\label{Theo:convergence:proof:errin1}\eeq
Note that
$$ \begin{array}{l}
     \left|(f(t,y(t))+Bu(t))^{\top}\left(\nabla_y V(t,y(t))-\nabla_y\mathcal{I}_k(\vartheta_k)(t,y(t))\right)\right|\\
     \ecart\dis \leq K\norm{\mathcal{I}_k(\vartheta_{k})-V}_{C([0,T];C^{1}(\overline{\Omega}))}+
     \frac{|B|^{2}}{\beta}\norm{\mathcal{I}_k(\vartheta_{k})-V}_{C([0,T];C^{1}(\overline{\Omega}))}^{2},
\end{array} $$
which is obtained by expressing $u$ as $u=U_V+u-U_V$.
Subtracting $(f(t,y(t))+Bu(t))^{\top}\nabla_y V(t,y(t))$ from both sides of \eqref{Theo:convergence:proof:errin1}, using the above inequality and rearranging the term we find
$$\begin{array}{l}\dis\frac{\beta}{2}|u(t)|^{2}+\ell(t,y(t))
\dis\leq -\frac{dV}{dt}(t,y(t))-(f(t,y(t))\\[1.5ex]
\hspace{3.5cm}+Bu(t))^{\top}\nabla_y V(t,y(t))+\left(2K+\frac{|B|^{2}}{\beta}\right) \norm{\mathcal{I}_k(\vartheta_{k})-V}_{C([0,T];C^{1}(\overline{\Omega}))}.\end{array}$$
Integrating from $t_0$ to $T$, we get
$$\begin{array}{l} \int_{t_0}^{T}l(t,y(t))+\frac{\beta}{2}|u(t)|^{2}dt\\[1.5ex]
\qquad \leq  V(t_0,y_0)-V(T,y(T))+\left(2K+\frac{|B|^{2}}{\beta}\right)(T-t_0) \norm{\mathcal{I}_k(\vartheta_k)-V}_{C([0,T];C^{1}(\overline{\Omega}))}.\end{array}$$
Since $V(T,y(T))=g(T,y(T))$ and the previous inequality holds for all $(t_0,y_0)\in (0,T)\times\omega$, we can integrate with respect to $t_0$ in $[0,T]$ and $y_0$ in $\omega$ and divide by $T|\omega|$ to obtain
\beq \mathcal{J}(\mathcal{I}_k(\vartheta_k))\leq \mathcal{J}(V) +T \left(2K+\frac{|B|^{2}}{\beta}\right)\norm{\mathcal{I}_k(\vartheta_k)-V}_{C([0,T];C^{1}(\overline{\Omega}))}. \label{Theo:convergence:proof:auxConv}\eeq
\par
For a fixed $r\in [0,1]$, let $\theta_{k,\gamma}\in \R^{m_k}$ be the solution of \eqref{AppLearningProblem} for $\gamma>0$ with $X=X_{k}$. Then, by the optimality of $\theta_{k,\gamma}$ and \eqref{Theo:convergence:proof:auxConv}, we have
\beq \begin{array}{l}
     \dis \mathcal{J}(\mathcal{I}_k(\theta_{k,\gamma}))+\gamma\left(\frac{1-r}{2}|\theta_{k,\gamma}|_2^{2}+|\theta_{k,\gamma}|_1\right)\leq \mathcal{J}(\mathcal{I}_k(\vartheta_{k}))+\gamma\left(\frac{1-r}{2}|\vartheta_{k}|_2^{2}+l|\vartheta_{k}|_1\right)\\
     \ecart \dis \leq \mathcal{J}(V) + T\left(2K+\frac{|B|^{2}}{\beta}\right)\norm{\mathcal{I}_k(\vartheta_{k})-V}_{C([0,T];C^{1}(\overline{\Omega}))}+\gamma\left(\frac{1-r}{2}|\vartheta_{k}|_2^{2}+l|\vartheta_{k}|_1\right).
\end{array}\eeq
Now we make the announced choice of $\gamma_k$ as
$$\gamma_k=\left(2K+\frac{|B|^{2}}{\beta}\right)T\frac{\norm{\mathcal{I}_k(\vartheta_{k})-V}_{C([0,T];C^{1}
(\overline{\Omega}))}}{\frac{1-r}{2}|\vartheta_{k}|_2^{2}+l|\vartheta_{k}|_1}$$
from which we obtain that
\beq  \mathcal{J}(\mathcal{I}_k(\theta_{k,\gamma})\leq \mathcal{J}(V) +2 T \left(2K+\frac{|B|^{2}}{\beta}\right) \norm{\mathcal{I}_k(\vartheta_{k})-V}_{C([0,T];C^{1}(\overline{\Omega}))}.
\eeq
Taking $k\to\infty$ we obtain that $\mathcal{J}(\mathcal{I}_k(\theta_{k,\gamma})$ converges to $\mathcal{J}(V)$. Further, by the definition of $\mathcal{V}$ and $\mathcal{J}$ we have
\beq \begin{array}{l}
\dis\int_{\omega}\int_{0}^{T}\left|\mathcal{V}(t_0,y_0,\mathcal{I}_k(\theta_{k,\gamma}))-V(t_0,y_0)\right|dt_0 dy_0\\
\ecart\dis=\int_{\omega}\int_{0}^{T}\left(\mathcal{V}(t_0,y_0,\mathcal{I}_k(\theta_{k,\gamma}))-V(t_0,y_0)\right)dt_0 dy_0\\
\ecart\dis\leq 2 T|\omega| \left(K+\frac{|B|^{2}}{\beta}\right) \norm{\mathcal{I}_k(\vartheta_{k})-V}_{C([0,T];C^{1}(\overline{\Omega}))}.
\end{array}   \eeq
This implies \eqref{Theo:convergence:L1conv} and consequently $\mathcal{V}(\cdot,\cdot,\mathcal{I}_k(\theta_{k,\gamma}))$ converges to $V$ in $L^{1}((0,T)\times\omega)$ as $k$ tends to infinity. Further, we can extract a sub-sequence of $\theta_{k,\gamma}$ still denoted by $\theta_{k,\gamma}$, such that $\mathcal{V}(\cdot,\cdot\mathcal{I}_k(\theta_{k,\gamma}))$ converges to $V$ almost everywhere.\par
Now, let  $(t_0,y_0)\in (0,T)\times\omega$ be such that $\mathcal{V}(t_0,y_0,\mathcal{I}_k(\theta_{k,\gamma}))$ converges to $V(t_0,y_0)$. Setting $y_k=\Phi(t_0,y_0,\mathcal{I}_k(\theta_{k,\gamma}))$ and $u_k=U_{\mathcal{I}_k(\theta_{k,\gamma})}$, we have
\beq y_k(t)\in \Omega\mbox{ for all }t\in [t_0,T]\mbox{ and }\norm{u_k}_{L^{2}((0,T);\R^{M})}^{2}\leq \frac{2}{\beta}\mathcal{V}(t_0,y_0,\mathcal{I}_k(\theta_{k,\gamma})) \eeq
for all $k\geq k_0$. Since $\mathcal{V}(t_0,y_0,\mathcal{I}_k(\theta_{k,\gamma})) $ is converging to $V(t_0,y_0)$, we have that $u_k$ is bounded in $L^{2}((t_0,T);\R^{M})$. Additionally, due to the fact that $\Omega$ is bounded, we know that $y_k$ is bounded in $C([t_0,T];\overline{\Omega})$. Combining the boundness of $u_k$  in $L^{2}((t_0,T);\R^{M})$ and boundness of $y_k$  in $C([t_0,T];\overline{\Omega})$ with the fact that $f$ is continuous in $\overline{\Omega}$ we get that $y$ is bounded in $H^{1}((t_0,T);\R^{d})$. Hence, passing to a sub-sequence, we obtain that there exist $\hat{u}\in L^{2}((t_0,T),\R^{M})$ and $\hat{y}\in H^{1}((t_0,T);\R^{d})$ such that
$$ u_k\weak\hat{u}\mbox{ in }L^{2}((t_0,T),\R^{M})\mbox{ and }y_k\weak\hat{y}\mbox{ in }H^{1}((t_0,T),\R^{d}), $$
when $k\to\infty$. Further, the compact embedding of $H^{1}((t_0,T),\R^{d})$ in $C([t_0,T];\R^{d})$ implies that, passing to sub-sequence,
$$ y_k\to\hat{y}\mbox{ in }C([t_0,T];\overline{\Omega}). $$
This convergences and the continuity of $\ell$ and $g$, and the lower semicontinuity of the norm of $L^{2}((t_0,T),\R^{M})$ imply that $\hat{u}$ is a solution of \eqref{ControlProblem} and $\hat{y}$ is the respective optimal trajectory. The strong convergence of $u_k$ is a consequence of its weak convergence and
the following norm convergence
\beq\begin{array}{l}
\dis\norm{u_k}^{2}_{L^{2}((t_0,T);\R^{M})}=\frac{2}{\beta}\left(\mathcal{V}(t_0,y_0,\mathcal{I}_k(\theta_{k,\gamma}))-\int_{t_0}^{T}\ell(t,y_k(t))dt\right)\\
\dis \to \frac{2}{\beta}\left(V(t_0,y_0)-\int_{t_0}^{T}\ell(t,\hat{y}(t))dt\right)=\norm{\hat{u}}^{2}_{L^{2}((t_0,T);\R^{M})},\end{array}  \eeq
for $k\to\infty$. Since for each $k\geq k_0$ $y_k$ satisfies \eqref{OpenLoop}, the strong convergence of $u_k$ implies that $y_k$ converges strongly to $\hat{y}$ in $H^{1}((t_0,T);\R^{d})$.
\end{proof}
\section{Optimization algorithm}
\label{OptimizationAlg}
In this section we describe the optimization algorithm that we use to solve \eqref{AppLearningProblem}. To this end, we consider $X=\R^{N}$ and a differentiable injection $\mathcal{I}:X\mapsto C([0,T];C^{2}(\overline{\Omega}))$. With the last assumption, the function $\mathcal{J}\circ \mathcal{I}$ is differentiable. Nevertheless, the objective function of problem \eqref{AppLearningProblem} includes a $\ell_1$ penalty term to promote the sparcity, which is non differentiable. Taking this into account, we decided to use a proximal gradient method. Additionally, we choose the Barzilai Borwein step size along with a non-monotone backtracking line search, which proved to be efficient for high dimensional problems (see \cite{AzKK}, \cite{Barzilai}, and \cite{Raydan} for a convergence analysis in the smooth case).
\par
We now describe the algorithm that we use to solve \eqref{AppLearningProblem}. We set $\tilde{\mathcal{J}}=\mathcal{J}\circ \mathcal{I}$ and we denote the $k-$th element of the sequence produced by the algorithm by $\theta^{k}\in X$, the step size by $s^k$, and we set at each iteration
\beq d^{k}:=\nabla \tilde{\mathcal{J}}(\theta^{k})+\gamma(1-r)\theta^{k}.
\label{gradient}\eeq
We use the proximal point update rule as is described  in section 10.2 in \cite{Beck}, namely we choose $\theta^{k+1}$ such that
\begin{equation}
    \theta^{k+1}=\argmin_{\vartheta\in X}
    \left\{d^{k}\cdot\left(\vartheta-\theta^k\right) +\frac{1}{2s^k}|\theta^k-\vartheta|_{2}^{2}+\gamma r|\vartheta|_{1}\right\}.
    \label{ProximalPointUpdate}
\end{equation}
To choose the step size $s^k$ we use the non-monotone backtracking line search described in \cite{WrightNowakFig}, starting from an initial guess $s_{0}^{k}$.
 That is, for $\kappa\in (0,1)$, $n$ a non negative integer number and $\beta\in (0,1)$, we take $s^{k}=s_0^{k}\beta^{i}$ such that $i$ is the smallest natural number which satisfies
\beq \tilde{\mathcal{J}}(\theta^{+}) \leq \max_{i\in\{\max(0,k-n),\ldots,n\}}\tilde{\mathcal{J}}(\theta^{k-i}) - \frac{\kappa}{s_0^{k}\beta^{i}}|\theta^{k}-\theta^{+}|^2, \label{Backtracking}\eeq
for $\theta^+>0$ and set $\theta^{k+1}=\theta^+$.
The Barzilai-Borwein step size is used as initial guess, namely we take $s_0^k$ as
\beq  s_{0}^{k}=\left\{\begin{array}{ll}
     \dis \big[(\theta_{k}-\theta_{k-1})\cdot(d_{k}-d_{k-1})\big]/|d_{k}-d_{k-1}|^{2}   &\mbox{ if } k\mbox{ is }odd,\\
      \ecart\dis |\theta_{k}-\theta_{k-1}|^{2}/\big[(\theta_{k}-\theta_{k-1})\cdot(d_{k}-d_{k-1})\big] & \mbox{ if }k\mbox{ is }even.
\end{array}\right. \label{BB}\eeq
We use non-monotone stopping criterion, namely, defining for $k>0$
$$J_k:=\tilde{\mathcal{J}}(\theta^{k})+\gamma\left(\frac{1-r}{2}|\theta^{k}|^2_2+r|\theta^{k-i}|_1\right) \mbox{ and }\quad J^{max}_{k}:=\min_{i\in\{\max(0,k-n),\ldots,n\}}J_{k-i},$$
we iterate until $\left(J^{max}_{k-1}-J_k\right)\leq J^{max}_{k-1}\cdot tol $.\par
We summarize the algorithm as follows:
\begin{algorithm}[H]
\caption{Learning algorithm.}
\label{Alg1}
\begin{algorithmic}[1]
\Require An initial guess $\theta^{0}\in\R^{M}$, $\kappa>0$, $\beta\in (0,1)$, $s_0\in (0,\infty)$.
\Ensure An approximated stationary point $\theta^{*}$ of \eqref{AppLearningProblem}.
\State $k=1 $
\State For $\theta^0$, set $d^0$ \eqref{gradient}
and $J_0$.
\State Use \eqref{BB} to obtain $s_{0}$.
\State Use \eqref{ProximalPointUpdate} to get $\theta^{1}$.
\State Obtain $d^{1}$ and set $J_1$
\While{ $J^{max}_{k-1}-J_{k}>tol\cdot J^{max}_{k-1} $ }
\State Obtain $s_{0}^{k}$ by using \eqref{BB} and  choose $s^{k}$ using \eqref{Backtracking}.
\State Use \eqref{ProximalPointUpdate}  to get $\theta^{k+1}$.
\State Obtain $d^{k+1}$ by using \eqref{gradient} and set $J_{k+1}=\tilde{\mathcal{J}}(\theta^{k+1})+\gamma\left(\frac{1-r}{2}|\theta^{k+1}|^2_2+r|\theta^{k+1}|_1\right).$
\State Set $k=k+1$.
\EndWhile
\Return $\theta^{\bar{k}}$, such that $\bar{k}\in \argmin_{k}J_k$.

\end{algorithmic}

\end{algorithm}

\section{Polynomial learning problem}
\label{PolynomialLearningProblem}
In order to introduce the polynomial ansatz we need some notation. Let  $n\in \N$ and $d\in \N$, where $\N=\{0,1,2,\ldots\}$. For a multi-index $\alpha=(\alpha_1,\ldots,\alpha_d)\in \N^{d}$ we define a monomial $\phi_{\alpha}$ by
\beq \phi_\alpha(y)=\prod_{j=1}^{d}y_{j}^{\alpha_j},\quad y \in \R^{d}.\label{Monomial}\eeq
We denote by $\Lambda_{n}$ the set of multi-indexes such the sum of all its elements is lower or equal than $n$ and  by $\B_{n}$ the set of all the monomials with total degree lower or equal to $n$, that is
\beq\Lambda_n=\left\{ \alpha\in \N^{d}:\ \sum_{j=1}^{d}\alpha_j\leq n\right\},\quad \mathcal{B}_{n}=\{\phi_{\alpha}:\ \alpha\in \Lambda_{n}\} \label{AlphaIndexes}\eeq

We further denote  the {\em hyperbolic cross} multi-index set by $\Gamma_{n}$ and we also introduce a subset $\mathcal{S}_{n}$ of $\B_n$ composed by the elements of  the subset $\B_n$ associated to the multi-indexes in $\Gamma_n$, i.e.
\beq
\Gamma_{n}=\Big\{\alpha=(\alpha_1,\ldots,\alpha_d)\in \N^{d}:\quad \prod_{j=1}^{d}(\alpha_j+1)\leq n+1\Big\}, \quad \mathcal{S}_{n}=\{\phi_{\alpha}:\ \alpha\in\Gamma_{n}  \}.
\label{HyperCrossMultiindex}
\eeq
\par

It is important to observe that the cardinality of $\Lambda_n$ is $\sum_{j=1}^{n}\binom{d+j+1}{ j}$, on the other hand the cardinality of $\Gamma_n$ is bounded by $\min\{2n^{3}4^{d},e^{2}n^{2+\log_2(n)}\}$ (see \cite{Adcock2017}). Hence,  for high $d$ the cardinality of the hyperbolic cross is smaller than the cardinality of $\Lambda_n$. For this reason, $\Lambda_n$ is more suitable for high dimensional problems.
\par
We point out that we only need to consider the elements in either $\Lambda_n$ or $\Gamma_n$ that satisfy $B^{\top}\nabla\phi_{\alpha}(y)\neq 0$ for all $y\in\R^{d}$. By Lemma 1 in \cite{KuVaWal}, we know that these elements are characterized by the condition $B^{\top}e_{i}\neq 0$ for all $i=1,\ldots,d$ such that $\alpha_i>0$. For a set $S$ of multi-indexes we define
\beq S(B)=\{\alpha\in S:\ B^{\top}e_{i}\neq 0 \mbox{ for all }i=1,\ldots,d \mbox{ such that } \alpha_i>0\}.\eeq
With this definition, we only consider either $\Gamma_n(B)$ or $\Lambda_n(B)$.
\par
We assume that $\Gamma_n(B)$ and $\Lambda_n(B)$ are ordered, namely $\Lambda_n(B)=\{\alpha^{tot}_i\}_{i=1}^{|\Lambda_n(B)|}$ and $\Gamma_n(B)=\{\alpha_i^{hc}\}_{i=1}^{|\Gamma_n(B)|}$. For $j\in \{0,1,\ldots,m\}$ and $t\in \R$ we define
$q_{j}(t)=t^{j}$. Then, for $m\in\N$ we define $\mathcal{I}^{tot}_{m,n}(\theta)$ and
$\mathcal{I}^{hc}_{m,n}(\theta)$ by
\beq \mathcal{I}^{tot}_{m,n}(\theta)(t,y)=\sum_{j=0}^{m-1}\sum_{i=1}^{|\Lambda_n(B)|}\theta_{j,i}q_{j}(t)\phi_{\alpha^{tot}_i}(y),\ \mathcal{I}^{hc}_{m,n}(\theta)(t,y)=\sum_{j=0}^{m-1}\sum_{i=1}^{|\Gamma_n(B)|}\theta_{j,i}q_{j}(t)\phi_{\alpha^{hc}_i}(y),\eeq
for all $(t,y)in \R\times\R^{d}$, and for $\theta$ in $\R^{m}\times\R^{|\Lambda_n(B)|}$ or $\R^{m}\times\R^{|\Gamma_n(B)|}$, respectively.\par
Following the previous notation, the total degree polynomial learning problem and the hyperbolic cross polynomial learning problem are respectively given by
\beq
\min_{\theta\in \R^{m}\times\R^{|\Lambda_n(B)|}}\mathcal{J}(\mathcal{I}^{tot}_{m,n}(\theta))+\gamma\left(\frac{1-r}{2}|\theta|^{2}_2+r|\theta|_1\right)
\label{PolynomialLearningProblem:total}
\eeq
and \beq
\min_{\theta\in \R^{m}\times\R^{|\Gamma_n(B)|}}\mathcal{J}(\mathcal{I}^{hc}_{m,n}(\theta))+\gamma\left(\frac{1-r}{2}|\theta|^{2}_2+r|\theta|_1\right)
\label{PolynomialLearningProblem:hc}
\eeq
For these problem we say that $m$ is the time degree and $n$ is the space degree.\par
Regarding the convergence and the existence of solutions of problems  \eqref{PolynomialLearningProblem:total} and \eqref{PolynomialLearningProblem:hc}, we known by Theorem 9 in \cite[Section 7.2]{Hayek} that hypothesis \eqref{Hyp:Seq:Dens} is satisfied as $m$ and $n$ tend to infinity. Hence, by  \Cref{Theo:convergence} there exist $(m_0^{tot},n_0^{tot})$ and $(m_0^{tot},n_0^{hc})$ such that problem \eqref{PolynomialLearningProblem:total} has an optimal solution if $m\geq m_0^{tot}$ and $n\geq n_0^{tot}$  and problem \eqref{PolynomialLearningProblem:hc} has an optimal solution if $m\geq m_0^{hc}$ and $n\geq n_0^{hc}$.
\section{Numerical Examples}
\label{NumericalExperimentsSection}
We implement \Cref{Alg1} to solve problem \eqref{AppLearningProblem} for three experiments. In all them $\mathcal{J}$ is approximated by means of the Monte-Carlo method. We choose uniformly at random a training set $\{(t_0^i,y_0^{i})\}_{i=1}^{N}$ of initial times and initial conditions and approximate $\mathcal{J}(v)$ by
$$ \mathcal{J}(v)\approx \frac{1}{N}\sum_{i=1}^{N}\mathcal{V}(t_0^i,y_0^i,v).$$
In each case we shall specify  the sets of initial conditions and initial times for training and testing, and the discretization method used. To avoid numerical issues we normalize the arguments of the monomials,  i.e., we redefine $\phi_{\alpha}(y)$ by
$\phi_{\alpha}(y)=\prod_{i=1}^{d}\left(\frac{y}{l}\right)^{\alpha_i}$ and $q_j^{m}(t)$ by $q_j^{m}(t)=\left(\frac{t}{T}\right)^{j}$, where $T$ is the time horizon of the control problem and $l$ is a positive constant chosen for each specific problem.
\par The performance of our approach is measured  by comparing $\hat{u}_i(t)=U_{\hat{v}}(t,\Phi(t_0^{i},y_0^{i},\hat{v})(t))$ to the corresponding  optimal control $u^*$  obtained by solving  the open loop problem,  for every initial condition in the test set given by $\hat{u}_i(t)=U_{\hat{v}}(t,\Phi(t_0^{i},y_0^{i},\hat{v})(t))$, and subsequently computing the normalized squared error in $L^{2}((0,T);\R^{m})$ given by
$$
MNSE_u(\{\hat{u}_{i}\}_{i=1}^{N},\{u_i^{*}\}_{i=1}^{N})=
\sum_{i=1}^{N}\int_{t_0^i}^{T}|\hat{u_i}-u_i^{*}|^{2}dt\Big/\sum_{i=1}^{N}\int_{t_0^i}^{T}|u_i^{*}|^{2}dt,
$$
and analogously the error $MNSE_y(\{\hat{y}_{i}\}_{i=1}^{N},\{y_i^{*}\}_{i=1}^{N})$ for the states.
We also compare the optimal value of the open loop problem with the  objective function of \eqref{ControlProblem} evaluated in $\hat{u}$ by computing the mean normalized absolute error
$$ MNAE_J(\{\hat{u}_{i}\}_{i=1}^{N},\{u_i^{*}\}_{i=1}^{N})=\sum_{i=1}^{N}
|J(t_0^{i},u_i^{*},y_0^{i})-J(t_0^{i},\hat{u}_i,y_0^i)|\Big/\sum_{i=1}^{N}J(t_0^{i},u_i^{*},y_0^{i}).$$
\par
The optimal control for the  open loop problem is computed by  a gradient descent algorithm with a backtracking line-search.

\subsection{Linearized inverted pendulum}
We consider the following nonautonomous linear-quadratic problem
\beq
\begin{array}{c}
    \dis \min_{u\in L^{2}((0,T),\R)}\frac{1}{2} \int_{t_0}^{T}|y(t)|^{2}dt+\frac{\beta}{2} \int_{t_0}^{T}|u(t)|^2 dt +\frac{\alpha}{2}|y(T)|^{2}\\ \ecart
    \dis s.t.\ y'(t)= A(t)y+Bu(t),\ y(0)=y_0,
\end{array}
\label{LQP}
\eeq
with
\beq A(t)=\left(\begin{array}{cccc}
     0 & 1 & 0&0\\
    \ecart 0 & \dis-\frac{F_r+M_c'(t)}{M_c(t)} & 0 & 0 \\
    \ecart\dis 0 & 0 & 0 & 1\\
    \ecart\dis -\frac{g_a}{L_p} & 0 & \dis\frac{g_a}{L_p}& 0
\end{array}\right) \mbox{ and } B=\left(\begin{array}{c}
     0  \\
     \ecart 1 \\
     \ecart 0 \\
     \ecart 0
\end{array}\right).\label{InvertedPendulumEq}\eeq
where $T=1$, $t_0\in (0,1)$, $F_r=1$, $g_a=9.8$, $L_p=0.842$, and
$M(t)=\exp(-10t)+1.$ Equation \eqref{InvertedPendulumEq} describes the linearized motion of an inverted pendulum mounted on a car.   Here, differently from  \cite{Kwakernaak}, we consider a time dependent mass.
\par
In this example we sample the initial conditions from $[-0.5,0.5]^{4}$ and the initial times from $[0,1]$. Due to the fact that the value function of this  linear-quadratic problem is a quadratic function of the initial condition we only solve \eqref{PolynomialLearningProblem:total} with the spatial degree equal to 2 and $l=0.5$. To study the convergence of this problem with respect to the cardinality of the training set we sample sets with sizes 8,12 ,16, and 20 as training set. For the test set we sample a set of cardinality 500. Since the dimensionality of this problem is small we do not investigate for this problem  sparsity of its solutions, and fix  $\gamma=10^{-5}$ and $r=0$. On the other hand, since we do not have information about the dependence of the value function on time, we consider 5, 10, 15, 20, 25 and 30 as time degrees. To discretize the closed and open loop problems we used an explicit Euler scheme. For solving the learning problem we used \Cref{Alg1} with $n=5$ and $tol=10^{-5}$.
\par
We see in \Cref{InvertedPendulum:trainerror} that for all the training sets the $MNSE_u$, the $MNSE_y$, and the $MNAE_J$ are decreasing with the time degree up to degree 20. Further, we observe the same for the test set in \Cref{InvertedPendulum:testerror} as the time degree and the cardinality of the time degree increase. We note that for the times degrees larger than 15 and training size bigger than  12, the test errors in \Cref{InvertedPendulum:testerror} do not change significantly.
\plotErrors{InvertedPendulumVarMass}{train}{Linearized inverted pendulum}{InvertedPendulum}
\plotErrors{InvertedPendulumVarMass}{test}{Linearized inverted pendulum}{InvertedPendulum}
\subsection{Allen-Cahn Equation}
We now consider the Allen-Cahn equation with the Neumann boundary conditions and quadratic cost, namely,
\beq
\begin{array}{c}
     \dis \min_{u_i\in L^{2}([0,T),\R)} \int_{t_0}^{T}\int_{-1}^{1}|y(x,t)|^{2}dxdt+\frac{\beta}{2}\int_{t_0}^{T}|u(t)|^{2}dt +\frac{\alpha}{2}\int_{-1}^{1}|y(T,x)|^2 dx\\
     \ecart\dis y'(t,x)=\nu\frac{\partial^{2}y}{\partial x^{2}}(t,x)+y(t,x)(1-y^2(t,x))+\sum_{i=1}^{3}\chi_{\omega_i}(x)u_i(t)\\
     \ecart\dis\frac{\partial y}{\partial x}(t,-1)=\frac{\partial y}{\partial x}(t,1)=0, \quad  y(0,x)=y_0(x),
     \label{AllenCahn}
\end{array}
\eeq
for $x\in (-1,1)$ and $t>0$, where  $\nu=0.5$, $T=4$, and $\chi_{\omega_i}$ are the indicator functions of the sets $\omega_{1}=(-0.7,-0.4)$, $\omega_{2}=(-0.2,0.2)$, and $\omega_{3}=(0.4,0.7)$. This problem admits 3 steady states, which are $-1$, $0$ and $1$, with $0$ being unstable.\par
 Since problem \eqref{AllenCahn} is infinite-dimensional, we discretize it by using a Chebyshev spectral collocation method with $39$ degrees of freedom. The first integral in \eqref{AllenCahn}  is approximated by means of the Clenshaw-Curtis quadrature. For further details on the Chebyshev spectral collocation method and the Clenshaw-Curtis quadrature we refer to \cite[Chapters 6, 19]{Boyd}, and \cite[Chapters 7, 12, 13]{Trefethen}.\par
 Due to the high dimensionality of this problem, the evaluation of the feedback law is computationally expensive. Therefore we only solve the hyperbolic cross polynomial learning problem \eqref{PolynomialLearningProblem:hc}. Further, the sparsity of the solution can be influenced by the penalty coefficient $\gamma$. With this in mind, we pay attention to the influence of  $\gamma$ on the sparsity of the solution and the performance of the obtained feedback laws.\par
 We sample the initial conditions for training and test from $[-10,10]^{39}$ and the initial times from $[0,1]$. The cardinality of the training and test sets are 10 and 500 respectively. For learning problem we set the time degree to 15, the sparsity coefficient to $r=0.5$ and $l=10$. For the learning algorithm we  set $n=5$ and $tol=10^{-5}$. We used the Crank–Nicolson scheme to discretize the closed and open loop problems.
\par
In \Cref{AllenCahn:trainerror} and \Cref{AllenCahn:testerror} the training and test errors are depicted. From \Cref{AllenCahn:trainmnae} and \Cref{AllenCahn:testmnae} we see that the MNAE$_J$ is lower for the spatial degree  4 than for the spatial degree  2,  as expected. Moreover, we note that the MNAE$_J$ decreases as $\gamma$ decreases for $\gamma>10^{-2}$ and for $\gamma\leq 10^{-2}$ the training MNAE$_J$ is lower than 2\% and the test MNAE$_J$ is lower than 6\%. The same behaviour is observed in \Cref{AllenCahn:trainmnsey} and \Cref{AllenCahn:testmnsey}  for the MNSE$_y$. Nevertheless, in \Cref{AllenCahn:trainmnseu} and \Cref{AllenCahn:testmnseu} we see that the MNSE$_u$ tend to increase as $\gamma$ decreases and the MNSE$_u$ is above 200\%. We point out that this does not contradict \Cref{Theo:convergence}, because it only ensures the convergence with respect to the value function. One possible  explanation for this is a possible lack of uniqueness of solutions of the open loop problem. Regarding the sparsity of the solutions of the learning problem we have to turn our attention to \Cref{AllenCahn:errortable}. In this table the cardinality of the support of the solutions of the learning problem for each space degree and $\gamma$ are shown along with the respective train and test MNAE$_J$. Here we define the support of $\theta\in \R^{m}$ as the set of indexes given by $\{i:\theta_i\neq 0\}$. We observe that the sparsity of the solutions increases with $\gamma$. On the other, the training and test MNAE$_J$  decrease with $\gamma$ for $\gamma \ge 10^{-4}$.

\begin{table}[h!]
\centering
\begin{tabular}{|c|c|c|c|c|c|}\hline
$\gamma$&\makecell{Spatial\\degree}&\makecell{Training\\MNAE$_{J}$ [\%]}&\makecell{Test\\MNAE$_{J}$ [\%]}&\makecell{Percentage\\ Support \\ cardinality [\%]}&\makecell{Support \\ cardinality}\\ \hline
1.0e-01&2&4.16&10.03&21.54&84\\
1.0e-01&4&3.27&9.72&1.91&134\\
\hline[0.1pt]
1.0e-02&2&2.33&5.97&85.13&332\\
1.0e-02&4&0.71&4.07&23.06&1619\\
\hline[0.1pt]
1.0e-03&2&2.12&5.38&97.44&380\\
1.0e-03&4&0.30&3.50&70.24&4931\\
\hline[0.1pt]
1.0e-04&2&1.78&4.56&100.00&390\\
1.0e-04&4&0.52&3.61&99.63&6994\\
\hline[0.1pt]
1.0e-05&2&1.92&4.80&100.00&390\\
1.0e-05&4&0.36&3.54&99.94&7016\\
\hline\end{tabular}
\caption{Cardinality and errors Allen-Cahn equation.}
\label{AllenCahn:errortable}
\end{table}

\plotErrors{ControlledAllenCahnProblem}{train}{Allen-Cahn equation}{AllenCahn}
\plotErrors{ControlledAllenCahnProblem}{test}{Allen-Cahn equation}{AllenCahn}

 \subsection{Collision-Avoiding multi-agent control problem}
 In this example we consider a set of $N_a\geq 2$ agents with states $\{(x^{a}_i)\}^{N_a}_{i=1}\subset \R^{p}$, a set of targets $\{(x^{t}_i)\}^{N_a}_{i=1}\subset \R^{p}$ to be reached by each agent and a set of $N_o\geq 1$ obstacles denoted by $\{(x^{o}_i)\}^{N_a}_{i=1}\subset \R^{p}$. The control is on the velocity of the agents, i.e.,
 $ (x_i^a)'(t)=u_a(t)$  for $t\in (t_0,T)$. The running cost  is given by
 $$ \ell(x_1^{a},\ldots,x_{N_a}^{a})=\sigma_1Q(x_1^{a},\ldots,x_{N_a}^{a})+\sigma_2W(x_1^{a},\ldots,x_{N_a}^{a}) $$
 where
 $$Q(x_1^{a},\ldots,x_{N_a}^{a})=\frac{1}{N_a\cdot N_o}\sum_{i=1}^{N_a}\sum_{j=1}^{N_o}\exp(-\frac{1}{2r_o^2}|x_i^a-x_j^o|^2)\frac{1}{r_o^q}$$
 and
 $$ W(x_1^{a},\ldots,x_{N_a}^{a})=\frac{2}{N_a\cdot(N_a-1)}\sum_{i=1}^{N_a}\sum_{j=i+1}^{N_a}\exp(-\frac{1}{2r_a^2}|x_i^a-x_j^a|^2)$$
 with $r_o,r_a>0$. The term $Q$ accounts for collision between agents and obstacles, and $W$ accounts for the collision among agents. To enforce the agents to reach the targets, the following terminal cost is used
 $$ g(x_1^{a},\ldots,x_{N_a}^{a})=\frac{\sigma_3}{2N_a}\sum_{i=1}^{N_a}|x_i^a-x_i^t|^{2}.$$
 The resulting optimal control problem is given by
 \begin{equation}
     \begin{array}{c}
         \dis \min_{ u_i\in L^{2}((t_0,T);\R^{p})}\int_{t_0}^{T}\ell(x_1^{a}(t),\ldots,x_{N_a}^{a}(t)) dt+\sum_{i=1}^{N_a}\frac{\beta}{2N_a}\int_{t_0}^{T}|u_i(t)|^{2}dt+g(x_1^{a}(T),\ldots,x_{N_a}^{a}(T)),\\
          \ecart\dis \frac{dx_i^{a}}{dt}(t)=u_a(t), \quad t\in (t_0,T),\quad x_a^i(0)=x_{a,0}^i,\quad i\in\{1,\ldots,N_a\}.
     \end{array}
 \end{equation}
This formulation is similar to those given in \cite{Onken} and \cite{Onken2}.\par
For our implementation  we choose $p=2$, $N_a=10$, $N_o=4$, $r_o=0.2$, $r_a=0.1$, $\sigma_1=10$, $\sigma_2=10$ and $\sigma_3=100$. The obstacles and targets are given by $x_i^{o}=\frac{1}{2}\left(\cos\left(\frac{2(i-1)\pi}{N_o}\right),\sin\left(\frac{2(i-1)\pi}{N_o}\right)\right)$ for i in $\{1,\ldots,N_o\ldots\}$  and $x_i^{t}=\frac{1}{2}\left(\cos\left(\frac{2(i-1)\pi}{N_a}\right),\sin\left(\frac{2(i-1)\pi}{N_a}\right)\right)$ for i in $\{1,\ldots,N_a\}$. Both the targets and the obstacles are depicted in \Cref{ObstacleProb:ObsTar}.
\par
For the learning problem we consider $\omega=\omega_1\times\omega_2\ldots \times \omega_{N_a}$, where $$\omega_i=\left\{(\rho\cos(\theta),\rho\sin(\theta))\in \R^{2}:\ 0.8\leq \rho\leq 2,\ \theta\in \left(\frac{2\pi}{N_a}(i-1),\frac{2\pi}{N_a}i\right)\right\}.$$
The cardinality of the training and test sets are 10 and 500 respectively. For the learning problem we set the time degree to 15, $l=2$, and the sparsity coefficient to $r=0.5$. For the learning algorithm we  set $n=5$ and $tol=10^{-4}$. We used the explicit Euler scheme to discretize the closed and open loop problems.
\par
In \Cref{ObstacleProb:trainerror} and \Cref{ObstacleProb:testerror} the training and test errors are depicted. From \Cref{ObstacleProb:trainmnae} and \Cref{ObstacleProb:testmnae} we see that all the errors are lower for the space degree equal to 4 than for the case when the space degree is equal to 2 as expected. Moreover, we note that for $\gamma\leq 10^{-2}$ the train and test MNAE$_J$ are lower than 5\%. The cardinality of the support of the solutions of the learning problem for different spatial degrees and $\gamma$ are shown in \Cref{ObstacleProb:errortable}. As in the Allen-Cahn problem, we observe that the sparsity of the solutions increases with $\gamma$, but the train and test MNAE$_J$ increase for $\gamma$ sufficiently large.

\begin{table}[h!]
\centering
\begin{tabular}{|c|c|c|c|c|c|}\hline
$\gamma$&\makecell{Spatial\\degree}&\makecell{Training\\ MNAE$_{J}$ [\%]}&\makecell{Test\\ MNAE$_{J}$ [\%]}&\makecell{Percentage\\ Support \\ cardinality}&\makecell{Support \\ cardinality}\\ \hline
1.0e-01&2&25.46&29.98&66.50&399\\
1.0e-01&4&16.38&26.90&30.77&1246\\ \hline[0.1pt]
1.0e-02&2&7.17&8.23&90.83&545\\
1.0e-02&4&5.36&7.11&38.07&1542\\ \hline[0.1pt]
1.0e-03&2&6.65&7.37&100.00&600\\
1.0e-03&4&3.01&5.74&91.11&3690\\ \hline[0.1pt]
1.0e-04&2&4.29&4.98&100.00&600\\
1.0e-04&4&2.76&4.53&99.85&4044\\ \hline[0.1pt]
1.0e-05&2&4.64&5.31&100.00&600\\
1.0e-05&4&2.93&4.71&99.95&4048\\
\hline\end{tabular}\label{ObstacleProb:errortable}
\caption{Cardinality and error collision-avoiding multi-agent control problem.}
\end{table}

\begin{figure}[h!]
\center
    \includegraphics[width=0.5\textwidth]{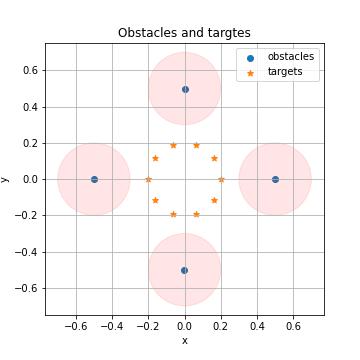}
\caption{Obstacles and targets.}
\label{ObstacleProb:ObsTar}
\end{figure}
\plotErrors{ObstacleProblem}{train}{Collision-Avoiding multi-agent control problem}{ObstacleProb}
\plotErrors{ObstacleProblem}{test}{Collision-Avoiding multi-agent control problem}{ObstacleProb}

\par
\section{Conclusion}
In this work a learning approach based on a polynomial ansatz for the synthesis of feedback laws of finite horizon control problem was studied. The performance of this approach was tested on 3 different problems. In all the experiments, the feedback law synthesised by the presented approach was capable of approximating the solutions of the corresponding open loop problems. Further, our experiments show that by tuning the penalty coefficients and the degree of the polynomial ansatz it is possible to find sparse polynomial feedback laws.

\vskip 0.2in
\bibliography{biblio}

\end{document}